\definecolor{cobalt}{rgb}{0.0, 0.28, 0.67}
\newtheorem{theorem}{Theorem}[section]
\newtheorem{lemma}[theorem]{Lemma}
\newtheorem{corollary}[theorem]{Corollary}
\newtheorem{proposition}[theorem]{Proposition}
\theoremstyle{definition}
\newtheorem{definition}[theorem]{Definition}
\theoremstyle{remark}
\newtheorem{remark}[theorem]{Remark}
\newcommand{\vp}{\varphi}
\newcommand{\clj}{\mathcal{J}}
\def \D{\mathbb{D}}
\def \T{\mathbb{T}}
\def \Z{\mathbb{Z}}
\newcommand{\clb}{\mathcal{B}}
\newcommand{\clm}{\mathcal{M}}
\newcommand{\cls}{\mathcal{S}}
\numberwithin{equation}{section}
\title[Commutant of complex symmetric weighted composition operators]{Operators commuting with complex symmetric weighted composition operators on $H^2$}
\author[Sudip Ranjan Bhuia]{Sudip Ranjan Bhuia}
\address{Indian Statistical Institute, Statistics and Mathematics Unit, 8th Mile, Mysore Road, Bangalore, 560059,
	India}
\email{sudipranjanb@gmail.com}
\subjclass[2020]{Primary 47B20 ; Secondary 47A05, 47B38, 47B33}
\keywords{Weighted composition operator, complex symmetric operator, reproducing kernel Hilbert space }
\date{\currenttime ;  \today}
\begin{document}
	
	\maketitle

\begin{abstract}
	In this paper, we initially study when an anti-linear Toeplitz operator is in the commutant of a composition operator.  Primarily, we investigate weighted composition operators $W_{g,\psi}$ commuting with complex symmetric weighted composition operators $W_{f,\varphi}$ on the Hardy space $H^2(\D)$. In particular, we give the descriptions of the symbols $g$ and $\psi$ such that the inducing weighted composition operator $W_{g,\psi}$ commutes with the complex symmetric weighted composition operator $W_{f,\varphi}$ with the conjugation $\clj$. Furthermore, we subsequently demonstrate that these weighted composition operators are normal and complex symmetric in accordance with the properties of the fixed point of the associated symbol $\vp$.
\end{abstract}
\section{Introduction and preliminaries}
Let $\mathcal{B}(H)$ be the algebra of all bounded linear operators on a separable complex Hilbert
space $H$. Given a fixed operator $T\in\mathcal{B}(H)$, we say an operator $S$ commutes with $T$ if $TS=ST$. The set of all operators which commute with $T$,
denoted $\{T\}^\prime$ that is, $\{T\}^\prime=\left\{S\in\mathcal{B}(H):ST-TS=0 \right\}$. It is well known that the set $\{T\}^\prime$ forms a weakly closed algebra which is called the commutant of $T$.

Let $\D$ denote the open unit disk in the complex plane $\mathbb{C}$. The Hardy space $H^2(\D)$ is the Hilbert space of the analytic functions $f$ on $\D$ having power series representations with square-summable complex coefficients. That is,
 \begin{equation*}
H^2(\D)=\left\{f: f(z)=\displaystyle\sum_{n=0}^{\infty}a_nz^n \quad\text{and}\quad \displaystyle\sum_{n=0}^{\infty}|a_n|^2<\infty \right\}.
 \end{equation*}
 
 The space $H^2(\D)$ is a Hilbert space with the inner product given by $$\langle f,g\rangle=\displaystyle\sum_{n=0}^{\infty}a_n\bar{b}_n,$$ where $f(z)=\displaystyle\sum_{n=0}^{\infty}a_nz^n$ and $g(z)=\displaystyle\sum_{n=0}^{\infty}b_nz^n$ are in  $H^2(\D)$.

Let $f$ be an analytic function on $\D$. Then $f$ is in $H^2(\D)$ if and only if
\begin{equation*}
\displaystyle\sup_{0<r<1}\frac{1}{2\pi}\int_{0}^{2\pi}|f(re^{i\theta})|^2d\theta<\infty.
\end{equation*} 
Moreover, the norm of such $f$ is given by
\begin{equation*}
\|f\|^2=\displaystyle\sup_{0<r<1}\frac{1}{2\pi}\int_{0}^{2\pi}|f(re^{i\theta})|^2d\theta<\infty.
\end{equation*} 
We define the Hilbert space $L^2(\mathbb{T})$ by the space of the square integrable functions
on $\mathbb{T}$, the unit circle in complex plane with respect to Lebesque measure, endowed with the inner product given by
\begin{equation*}
\langle f,g\rangle=\frac{1}{2\pi}\int_{0}^{2\pi}f(e^{i\theta})\overline{g(e^{i\theta})}d\theta,
\end{equation*} 
for all $f,g\in L^2(\mathbb{T})$.

The space $L^\infty(\mathbb{T})$ is the Banach space consisting of all essentially bounded measurable functions on $\mathbb{T}$. 

The Hardy space $H^2(\D)$ on the open unit disc $\D$ is identified with the closed subspace $H^2(\mathbb{T})$ of $L^2(\mathbb{T})$ consisting of functions $f$
on the boundary $\mathbb{T}$ whose negative Fourier coefficients vanish; the identification is given by the radial limit

\begin{equation*}
\tilde{f}(e^{i\theta}):=\lim_{r\rightarrow -1}f(re^{i\theta})\quad\text{for almost every } \theta\in [0,2\pi]
\end{equation*}
for $f\in H^2(\D)$.

Moreover, the reverse process is given by the Poisson integral formula
\begin{equation*}
f(re^{i\theta})=\frac{1}{2\pi}\int_{0}^{2\pi}\tilde{f}(e^{it})\frac{1-r^2}{1+r^2-2r\cos (\theta-t)},\quad re^{i\theta}\in \D.
\end{equation*}
Here $f$ is the harmonic extension of $\tilde{f}$ into the open unit disc $\D$ given by the Poisson integral formula. We
usually use the same symbol $f$ for $\tilde{f}$ and write $H^2(\D) = H^2(\mathbb{T})$ under the
identification. We denote by $H^\infty(\D)$ the space of all functions that are
analytic and bounded on $\D$. The space $H^\infty(\D)$ is a subspace of $H^2(\D)$.

 Let $\vp$ be a holomorphic self map of $\D$ and let $w \in \overline{\D}$. We say that $w$ is a \textit{fixed point} \cite[page 50]{Cowen:Book} of $\vp$ if
\[
\lim_{r \rightarrow 1^{-}} \vp(r w) = w.
\]
By a well known result \cite[page 51]{Cowen:Book}, if $w \in \T$ is a fixed point of $\vp$, then
\[
\vp'(w) = \lim_{r \rightarrow 1^{-}} \vp'(r w),
\]
exists as a positive real number or $+\infty$. Now let $\vp$ be an automorphism of $\D$. We say that $\vp$ is:
\begin{enumerate}
\item \textit{elliptic} if it has exactly one fixed point
situated in $\D$,
\item \textit{hyperbolic} if it has two distinct fixed points in $\T$, and
\item \textit{parabolic} if there is only one fixed point in $\T$.
\end{enumerate}
The Hardy space $H^2(\D)$ is a reproducing kernel Hilbert space with the kernel function $K_w$, where $K_w(z)=\frac{1}{1-\bar{w}z}$ for each $w\in \D$ with the property that $\langle f,K_w\rangle=f(w)$ for each $f\in H^2(\D)$. The linear span of the reproducing kernels $\{K_w:w\in \D\}$ is dense in $H^2(\D)$.

For a holomorphic self map $\vp$ on $\D$ and a holomorphic function $f$ on $\D$, the weighted composition operator $W_{f,\vp}$ on $H^2(\D)$ is defined by $W_{f,\vp}g =f\cdot (g\circ\vp)$ for all $g\in H^2(\D)$. The composition operator $C_\vp$ is defined by $C_\vp=W_{1,\vp}$. It is worth to note that $W_{f,\vp}=M_f C_\vp$ whenever $f\in H^\infty$, where $M_f$ denotes the multiplication operator. The action of the adjoint of weighted composition operator $W_{f,\vp}$ on the kernel function is given by \begin{equation*}
    W^*_{f,\vp}K_w=\overline{f(w)}K_{\vp(w)}.
\end{equation*} For more about composition operators, we refer the book \cite{Cowen:Book}.

 For $\varphi\in L^{\infty}$, the Toeplitz operator $T_{\varphi} : H^2 \rightarrow H^2$ is defined by the following formula
$$T_{\varphi} f = P (\varphi f )$$
for $f \in H^2$, where $P$ denotes the orthogonal projection of $L^2$ onto $H^2$ . It is known that $T_{\varphi}$ is bounded if and only if $\varphi \in L^{\infty}$ and $\left\|T_{\varphi}\right\| = \left\|\varphi\right\|_{\infty}$. A Toeplitz operator $T_{\varphi}$ is called analytic if $\varphi\in H^{\infty}$, that is,
$\varphi$ is a bounded analytic function on the unit disc $\mathbb{D}$.
\begin{definition}\label{d1.4}
	For an bounded anti-linear operator $X$ on $H$, there is a unique anti-linear operator $X^{\#}$ called the anti-linear adjoint of $X$, if it satisfies the following relation
	\begin{equation}
		\langle Xx,y \rangle =\overline{\langle x, X^{\#}y \rangle}
	\end{equation}
	for all $x,y \in H$.
\end{definition}
The anti-linear operator $X$ is called anti-linear self-adjoint if $X^{\#}=X$. Denote $\clb_{a}(H)$ by the collection of all anti-linear bounded operators on $H$.
\begin{definition}
A conjugation on a separable complex Hilbert space $H$ is an anti-linear operator $C$ on
$H$ which satisfies the following conditions
\begin{enumerate}
	\item $C$ is isometric: $\langle Cx, Cy \rangle = \langle y, x \rangle$, $x, y \in H $,
	\item $C$ is involutive: $C^2  = I$.
\end{enumerate}
 where $I$ is the identity operator on $H$.

\end{definition}
\begin{definition}
		An anti-linear operator $C$ on a separable complex Hilbert space $H$ is a conjugation if and only if it is both unitary and self-adjoint.
\end{definition}
We say that $T$ is $C$-symmetric if $T = CT^* C$, and complex symmetric if there exists a conjugation $C$ with
respect to which $T$ is $C$-symmetric.

In the Hardy space $H^2(\D)$, the conjugation operator $\mathcal{J}:H^2(\D)\rightarrow H^2(\D)$ is defined by \[\mathcal{J}f(z)=\overline{f(\bar{z})},\] for all $z\in \D$ and $f\in H^2(\D)$. In fact every conjugation $C$ on the Hardy space $H^2(\D)$ is unitarily equivalent to the conjugation operator $\clj$ (see \cite[p.~172]{RG2}).

 For more details on complex symmetric operators readers are referred to \cite{Garcia:Prodan:Putinar, Garcia:Putinar:I, Garcia:Putinar:II, Garcia:Wogen}. The study of complex symmetric weighted composition operators on the Hardy space was initiated by Garcia and Hammond in \cite{Garcia:Hammond}. In \cite{Jung}, authors gave a classification of complex symmetric weighted composition operators with respect to a special conjugation. Generally, providing information about the operators that commute with a specific operator offers insights into the operator's structure. The commutant of a particular operator is a relatively rare subject of study. C. C. Cowen's research has focused on examining the commutants of composition and specific Toeplitz operators, as indicated in references \cite{Cowen:AnaToeplitz} and \cite{Cowen:CommAnaFunc}. Additionally, B. Cload has contributed findings concerning the commutants of composition operators in \cite{Cload:PhDThesis}. In \cite{Bhuia:Pradhan:Sarkar}, it is observed that any anti-linear operator $A\in \clb_{a}(H^2)$ satisfies $M^*_zAM_z=A$ if and only if $A=T_\vp \clj$ which is called anti-linear Toeplitz operator. Following B. Cload, we are interested to study properties of anti-linear Toeplitz operator whenever it commutes with a composition operator.

 In \cite{Cowen:EKo}, authors have studied the self-adjoint weighted composition operators on the Hardy space of unit disc. E. Ko studied the commutant of self-adjoint weighted composition operators in \cite{EKo}. It would be interesting to classify weighted composition operators commuting with complex symmetric weighted composition operators $W_{f,\varphi}$ with certain conjugation  on $H^2$. In Section $2$, we address the following question: ``Let $A\in \clb_{a}(H^2)$ be such that $M^*_zAM_z=A$ and $C_\vp A=A C_\vp$. Then what can we say about $A$?".  Theorem $2.1$ answers this question, showing that such anti-linear operators are given by $A=T_f\clj$, where $f$ is an analytic function.

  In section $3$, we give description of the weighted composition operators that commute with complex symmetric weighted composition operator with the conjugation $\clj$.

\begin{theorem}\cite[Theorem 6]{Cowen:Guna:Ko}\label{SA_WCO_WHrady}
	Let $\gamma\in \mathbb{N}$. Let $f$ be in $H^\infty$ and let $\vp$ be an analytic map of the unit disc into itself. If the weighted composition operator $W_{f,\vp}$ is Hermitian on $H_\gamma(\D)$ , then
	$f(0)$ and $\vp^\prime(0)$ are real and
	\begin{equation}
	\vp(z)=a_0+\frac{a_1z}{1-\bar{a}_0z}\quad\text{and}\quad f(z)=\frac{c}{(1-\bar{a}_0z)^\gamma}
	\end{equation} 
	where  $a_1=\vp^\prime(0)$, and $c=f(0)$.

	Conversely, let $a_0\in \D$, and let $c$ and $a_1$ be real numbers. If $\vp(z)=a_0+\frac{a_1z}{1-\bar{a}_0z}$ maps the unit disc into itself and $f(z)=\frac{c}{(1-\bar{a}_0z)^\gamma}$, then the weighted
	composition operator $W_{f,\vp}$ is Hermitian.
\end{theorem}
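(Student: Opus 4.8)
The plan is to read off everything from the way $W_{f,\vp}$ acts on reproducing kernels, keeping careful track of the exponent $\gamma$. Write $K_w$ for the reproducing kernel of $H_\gamma(\D)$, so that $K_w(z)=(1-\bar a z)^{-\gamma}$ with $a=w$, and recall that in any reproducing kernel Hilbert space the adjoint acts on kernels by $W_{f,\vp}^*K_w=\overline{f(w)}K_{\vp(w)}$. First I would assume $W_{f,\vp}$ is Hermitian and compute $W_{f,\vp}K_w$ two ways: directly, $W_{f,\vp}K_w=f\cdot(K_w\circ\vp)$, and via self-adjointness, $W_{f,\vp}K_w=W_{f,\vp}^*K_w=\overline{f(w)}K_{\vp(w)}$. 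Equating the two gives the master functional equation
\[
f(z)\,(1-\bar w\,\vp(z))^{-\gamma}=\overline{f(w)}\,(1-\overline{\vp(w)}\,z)^{-\gamma}\qquad(z,w\in\D).
\]

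Next I would specialise. Setting $w=0$ (so $K_0\equiv 1$) collapses the left side to $f(z)$ and yields $f(z)=\overline{f(0)}\,(1-\overline{\vp(0)}\,z)^{-\gamma}$; evaluating at $z=0$ forces $f(0)=\overline{f(0)}$, so $c:=f(0)$ is real and $f(z)=c(1-\bar a_0 z)^{-\gamma}$ with $a_0:=\vp(0)$, which is already the asserted shape of $f$. Substituting this back into the master equation and cancelling the real nonzero constant $c$ (the degenerate case $f\equiv 0$ being trivial) reduces everything to the single identity
\[
(1-\bar a_0 z)^{\gamma}(1-\bar w\,\vp(z))^{\gamma}=(1-a_0\bar w)^{\gamma}(1-\overline{\vp(w)}\,z)^{\gamma}.
\]

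The key technical move is to pass from this $\gamma$-th power identity to its ``first power''
\[
(1-\bar a_0 z)(1-\bar w\,\vp(z))=(1-a_0\bar w)(1-\overline{\vp(w)}\,z).
\]
For fixed $w$ the quotient of the two sides, taken before raising to the $\gamma$-th power, is holomorphic in $z$ on $\D$, its $\gamma$-th power is identically $1$, and, being continuous with values in the finite set of $\gamma$-th roots of unity, it is constant; since it equals $1$ at $z=0$, the two linear factors coincide. To read off $\vp$ I would exploit that $\overline{\vp(w)}$ is a holomorphic function of $\bar w$, so that the identity may be regarded as an identity in the two independent complex variables $z$ and $u:=\bar w$. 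Expanding both sides in powers of $u$ and matching the $u^0$ and $u^1$ coefficients (the higher ones being then automatically consistent) pins down $\vp$ via
\[
(1-\bar a_0 z)\,\vp(z)=a_0+\bigl(\overline{\vp'(0)}-|a_0|^2\bigr)z .
\]
Comparing the coefficient of $z$ on the two sides forces $\vp'(0)=\overline{\vp'(0)}$, so $a_1:=\vp'(0)$ is real, and the identity then reads $\vp(z)=a_0+\frac{a_1 z}{1-\bar a_0 z}$, as required.

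For the converse I would simply reverse the kernel computation: given the prescribed $f$ and $\vp$ with $c,a_1$ real and $a_0\in\D$, it suffices to verify $\langle W_{f,\vp}K_w,K_u\rangle=\langle K_w,W_{f,\vp}K_u\rangle$ for all $u,w$, since the kernels span a dense subspace and $W_{f,\vp}$ is bounded ($f\in H^\infty$ because $a_0\in\D$, and $\vp$ maps $\D$ into $\D$ by hypothesis). Using $\overline{c}=c$, both inner products reduce to the single bilinear identity $(1-\bar a_0 u)(1-\bar w\,\vp(u))=(1-a_0\bar w)(1-u\,\overline{\vp(w)})$, which one checks by direct expansion from the explicit forms of $\vp$ and $\overline{\vp(w)}$. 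I expect the only genuine obstacle to be the branch/continuity argument needed to strip the exponent $\gamma$ in the forward direction, together with the careful treatment of $\bar w$ as an independent holomorphic variable when matching coefficients; once these are set up, the remainder is routine bookkeeping.
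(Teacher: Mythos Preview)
The paper does not prove this statement: it is quoted verbatim as \cite[Theorem 6]{Cowen:Guna:Ko} and used as background, so there is no ``paper's own proof'' to compare against. Your reproducing-kernel argument is correct and is, in fact, the standard route taken in the cited source: one exploits $W_{f,\vp}^*K_w=\overline{f(w)}K_{\vp(w)}$, sets $w=0$ to pin down $f$, then reads off $\vp$ from the resulting polarised kernel identity. Your handling of the exponent $\gamma\in\mathbb{N}$ via the roots-of-unity/connectedness trick is clean (and in fact unnecessary here, since for integer $\gamma$ the identity of $\gamma$-th powers of nonvanishing holomorphic functions agreeing at a point already forces equality); the coefficient-matching in the anti-holomorphic variable $\bar w$ is the right device; and the converse verification on pairs of kernels is exactly how one closes the argument. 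The only cosmetic point is the degenerate case $f\equiv 0$: then $W_{f,\vp}=0$ is Hermitian for \emph{any} $\vp$, so the asserted form of $\vp$ need not hold --- this is a harmless omission in the theorem statement rather than in your proof.
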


\begin{theorem}\cite[Theorem 3.5]{EKo}\label{Comtnt_SA_WCO}
	Let $g\in H^\infty$ and let $\psi$ be an analytic map of $\D$ into itself. Assume that $W_{f,\vp}$ is self-adjoint on $H^2(\D)$ and $\vp$, not an elliptic automorphism, has a fix point $b\in \D$. Then $W_{g,\psi}\in \{W_{f,\vp}  \}^\prime$ if and only if $W_{g,\psi}$ has the following symbols; for $b\neq 0$,
	\begin{equation*}
	\psi(z)=d_0+\frac{d_2z}{1-d_1z}\quad\text{and}\quad g(z)=g(b)\frac{d_3}{1-d_1z},
	\end{equation*}
	where $\psi(0)=d_0=\frac{(\alpha-1)b}{|b|^2\alpha-1},\, d_1=\frac{(\alpha-1)\bar{b}}{|b|^2\alpha-1},\, \psi^\prime(0)=d_2=\alpha\frac{(|b|^2-1)^2}{(|b|^2\alpha-1)^2},\, d_3=\frac{|b|^2-1}{|b|^2\alpha-1}$ for some $\alpha\in \mathbb{C}$ and for $b=0$,
	\begin{equation*}
	\psi(z)=\alpha z\quad\text{and}\quad g(z)=g(0)
	\end{equation*}
	for some $\alpha\in \mathbb{C}$.
\end{theorem}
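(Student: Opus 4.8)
The plan is to run the standard reproducing-kernel calculation and then solve the two resulting functional equations in turn. First I would record that, since $W_{f,\vp}$ is self-adjoint, the adjoint action $W^*_{f,\vp}K_w=\overline{f(w)}K_{\vp(w)}$ is in fact the action of $W_{f,\vp}$ itself, so that $W_{f,\vp}K_w=\overline{f(w)}K_{\vp(w)}$. Taking adjoints in $W_{g,\psi}W_{f,\vp}=W_{f,\vp}W_{g,\psi}$ and using $W_{f,\vp}^*=W_{f,\vp}$ shows that $W^*_{g,\psi}$ also commutes with $W_{f,\vp}$. Applying both sides of $W_{f,\vp}W^*_{g,\psi}=W^*_{g,\psi}W_{f,\vp}$ to $K_w$ and using $W^*_{g,\psi}K_w=\overline{g(w)}K_{\psi(w)}$ gives, for every $w\in\D$,
\begin{equation*}
\overline{g(w)f(\psi(w))}\,K_{\vp(\psi(w))}=\overline{f(w)g(\vp(w))}\,K_{\psi(\vp(w))}.
\end{equation*}
Since distinct reproducing kernels are linearly independent and $f(w)=c/(1-\bar a_0 w)$ never vanishes (the cases $c=0$ and $g\equiv 0$ being trivial), I would argue that $\vp(\psi(w))\neq\psi(\vp(w))$ on an open set would force $g\circ\vp\equiv 0$, hence $g\equiv 0$; excluding this, I conclude the two functional equations $\vp\circ\psi=\psi\circ\vp$ and $g(w)f(\psi(w))=f(w)g(\vp(w))$.

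Next I would solve the commuting relation $\vp\circ\psi=\psi\circ\vp$ for the self-map $\psi$. By \Cref{SA_WCO_WHrady} the map $\vp$ is linear fractional with interior fixed point $b$ and a second fixed point at $1/\bar b$; conjugating by the disk involution $\sigma_b(z)=(b-z)/(1-\bar b z)$, which sends $b\mapsto 0$ and $1/\bar b\mapsto\infty$, turns $\vp$ into a pure dilation $\tilde\vp(z)=\mu z$ with $\mu=\vp'(b)$. Because $\vp$ is not an elliptic automorphism, $|\mu|<1$, so $\mu$ is not a root of unity. The conjugate $\tilde\psi=\sigma_b\circ\psi\circ\sigma_b$ then satisfies $\tilde\psi(\mu z)=\mu\tilde\psi(z)$, and comparing Taylor coefficients forces $\tilde\psi(z)=\alpha z$ for some $\alpha\in\C$. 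Undoing the conjugation, $\psi=\sigma_b\circ(\alpha\,\cdot)\circ\sigma_b$, and a direct computation recovers the stated $d_0,d_1,d_2$ (and collapses to $\psi(z)=\alpha z$ when $b=0$, where necessarily $a_0=0$ and $f\equiv c$). Note that $\psi(b)=b$, so $\psi$ fixes $b$ as well.

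Finally, with $\vp,\psi,f$ all known, I would solve the weight equation for $g$. Dividing through by $c\neq 0$ rewrites it as $g(w)=g(\vp(w))\,(1-\bar a_0\psi(w))/(1-\bar a_0 w)$, a first-order functional equation. Iterating this along the orbit $\vp^{[n]}(w)\to b$ (the Denjoy--Wolff point, since $|\mu|<1$) and using that both $\vp$ and $\psi$ fix $b$ to guarantee convergence of the resulting infinite product, I obtain $g(w)=g(b)\prod_{k\ge 0}\bigl(1-\bar a_0\psi(\vp^{[k]}(w))\bigr)/\bigl(1-\bar a_0\vp^{[k]}(w)\bigr)$; this shows the solution is determined by the single value $g(b)$. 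I expect the main obstacle to be identifying this product in closed form: the claim is that it telescopes to $d_3/(1-d_1 z)$ with $d_3=1-d_1 b$, and the cleanest route is probably to verify directly that $g(z)=g(b)\,d_3/(1-d_1 z)$ satisfies the functional equation (an algebraic identity among $a_0,b,\alpha$) and then invoke the one-dimensionality just established for uniqueness. The converse is then a matter of checking that the explicit symbols satisfy both $\vp\circ\psi=\psi\circ\vp$ and the weight identity, which run the kernel computation backwards to give $W_{g,\psi}\in\{W_{f,\vp}\}'$.
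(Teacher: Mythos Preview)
This particular theorem is quoted from \cite{EKo} and is not proved in the present paper; there is therefore no ``paper's own proof'' to match against. What the paper \emph{does} prove is the direct analogue in which the hypothesis ``$W_{f,\vp}$ self-adjoint'' is replaced by ``$W_{f,\vp}$ $\mathcal{J}$-symmetric'' (the unnamed theorem in Section~2), and it is natural to compare your proposal to that argument.

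Your approach is essentially correct but proceeds along a genuinely different line. The paper first establishes, via a separate lemma, that $\psi$ must share the fixed point $\lambda$ of $\vp$, and then exploits the explicit eigenvectors $g_j(z)=\frac{1}{1-\lambda z}\bigl(\frac{\lambda-z}{1-\lambda z}\bigr)^j$ of $W_{f,\vp}$ from \Cref{Eigenvalue_CS}: since $W_{g,\psi}$ commutes with $W_{f,\vp}$ it must preserve each (one-dimensional) eigenspace, and reading off $W_{g,\psi}g_0=\gamma_1 g_0$ and $W_{g,\psi}g_1=\gamma_2 g_1$ already pins down both $\psi$ and $g$ by elementary algebra. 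The converse is checked by direct substitution, using an algebraic identity recorded separately in \Cref{relation among the coefficients}. By contrast, you pass immediately to the two functional equations $\vp\circ\psi=\psi\circ\vp$ and $g\cdot(f\circ\psi)=f\cdot(g\circ\vp)$, linearise $\vp$ to the dilation $\mu z$ by conjugating with $\sigma_b$ (this is legitimate: for self-adjoint $W_{f,\vp}$ the coefficient $1+|a_0|^2-a_1$ in the fixed-point equation is real, and one checks that $1/\bar b$ is indeed the second fixed point, so the conjugate really is a dilation), and then use the Taylor-coefficient argument that $\tilde\psi(\mu z)=\mu\tilde\psi(z)$ forces $\tilde\psi(z)=\alpha z$.

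What each buys: the eigenvector route is cleaner for the weight $g$, which drops out of a single scalar equation, whereas your orbit-iteration for $g$ is left sketchy (convergence of the product and the telescoping identification are asserted rather than shown; your fallback of verifying the closed form directly and invoking uniqueness is the honest way to finish). Your route, on the other hand, is more elementary in that it does not rely on knowing the eigenstructure in advance, and the K\"onigs-type linearisation makes transparent \emph{why} the commutant is parametrised by a single scalar $\alpha$. Two small caveats: your coefficient argument needs $\mu\neq 0$, i.e.\ $a_1\neq 0$, so the degenerate case $\vp\equiv a_0$ should be set aside; and the step ``distinct kernels are linearly independent, hence $\vp\circ\psi=\psi\circ\vp$'' deserves one more sentence ruling out the possibility that $g$ vanishes on the set where the two compositions differ.
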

\begin{theorem}\cite[Theorem 3.3]{Jung}\label{CSWCO_charac}
	Let $\varphi$ be an analytic self-map of $\D$ and $f\in H^\infty(\D)$ be not identically zero. If the weighted composition operator $W_{f,\varphi}$ is complex symmetric defined on $H^2(\D)$ with the conjugation $\mathcal{J}$, then 
	\begin{equation*}
	f(z)=\frac{b}{1-a_0 z}\quad\varphi(z)=a_0+\frac{a_1 z}{1-a_0 z},
	\end{equation*} 
	where $a_0=\varphi(0),\,a_1=\varphi^\prime(0)$, and $b=f(0)$.
	
	Conversely, let $a_0\in \D$. If $\varphi(z)=a_0+\frac{a_1 z}{1-a_0 z}$ maps the unit disc into itself and $f(z)=\frac{b}{1-a_0 z}$, then the weighted composition operator $W_{f,\varphi}$ is complex symmetric with the conjugation $\mathcal{J}$.
\end{theorem}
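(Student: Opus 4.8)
The plan is to recast the $\clj$-symmetry condition as an operator identity and then test it against the reproducing kernels, where all the structure is visible. Since $\clj^2=I$, the operator $W_{f,\vp}$ is $\clj$-symmetric precisely when $W_{f,\vp}^{*}=\clj W_{f,\vp}\clj$. First I would record two elementary computations. Writing $\tilde f(z)=\overline{f(\bar z)}$ and $\tilde\vp(z)=\overline{\vp(\bar z)}$ (both of which are again a holomorphic function and a holomorphic self-map of $\D$), a direct calculation on a test function $g$ gives $\clj W_{f,\vp}\clj = W_{\tilde f,\tilde\vp}$; and from $K_w(z)=\tfrac{1}{1-\bar w z}$ one reads off $\clj K_w = K_{\bar w}$. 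Hence the $\clj$-symmetry of $W_{f,\vp}$ is equivalent to the single operator equation $W_{f,\vp}^{*}=W_{\tilde f,\tilde\vp}$.

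Next I would evaluate both sides on the kernels. Using the adjoint action $W_{f,\vp}^{*}K_w=\overline{f(w)}K_{\vp(w)}$ recorded in the preliminaries, together with $W_{\tilde f,\tilde\vp}K_w(z)=\tilde f(z)K_w(\tilde\vp(z))$, the identity $W_{f,\vp}^{*}=W_{\tilde f,\tilde\vp}$ becomes the scalar functional equation
\[
\frac{\overline{f(w)}}{1-\overline{\vp(w)}\,z}=\frac{\tilde f(z)}{1-\bar w\,\tilde\vp(z)},\qquad w,z\in\D .
\]
Setting $w=0$ collapses the right-hand denominator to $1$ and yields $\tilde f(z)=\overline{f(0)}\big/\bigl(1-\overline{\vp(0)}z\bigr)$; applying $\clj$ once more and writing $a_0=\vp(0)$, $b=f(0)$ gives the asserted form $f(z)=b/(1-a_0 z)$, with $b\neq 0$ since $f\not\equiv 0$.

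Substituting this $f$ back and cancelling the common nonzero factor $\bar b$, I would clear denominators to reach $(1-\bar a_0 z)(1-\bar w\,\tilde\vp(z))=(1-\bar a_0\bar w)(1-\overline{\vp(w)}z)$. Putting $h=\vp-a_0$ (so $h(0)=0$, $\overline{\vp(w)}=\bar a_0+\overline{h(w)}$ and $\tilde\vp(z)-\bar a_0=\overline{h(\bar z)}$), the two copies of $(1-\bar a_0 z)(1-\bar a_0\bar w)$ cancel and the equation reduces to
\[
\frac{\bigl(1-\bar a_0 z\bigr)\,\overline{h(\bar z)}}{z}=\frac{\bigl(1-\bar a_0\bar w\bigr)\,\overline{h(w)}}{\bar w}.
\]
The left side depends only on $z$ and the right only on $w$, so both equal a constant $\lambda$; thus $\overline{h(w)}=\lambda\bar w/(1-\bar a_0\bar w)$, and conjugating gives $\vp(z)=a_0+\bar\lambda z/(1-a_0 z)$. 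Reading off $\vp'(0)=\bar\lambda=:a_1$ produces the claimed formula for $\vp$. This separation-of-variables step is where the Möbius structure is forced, so the main point to watch is getting that algebraic collapse exactly right; everything preceding it is bookkeeping.

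For the converse I would reverse the computation. Given $f(z)=b/(1-a_0 z)\in H^{\infty}(\D)$ and $\vp(z)=a_0+a_1 z/(1-a_0 z)$ a self-map of $\D$, I would compute $\tilde f$ and $\tilde\vp$ explicitly and check directly that $W_{\tilde f,\tilde\vp}K_w=\overline{f(w)}K_{\vp(w)}=W_{f,\vp}^{*}K_w$ for every $w\in\D$. Since $\{K_w:w\in\D\}$ has dense span in $H^2(\D)$ and both operators are bounded (here $f\in H^{\infty}$ and $\vp$ is a holomorphic self-map), this equality of actions on kernels upgrades to $W_{f,\vp}^{*}=W_{\tilde f,\tilde\vp}=\clj W_{f,\vp}\clj$, which is exactly the statement that $W_{f,\vp}$ is $\clj$-symmetric.
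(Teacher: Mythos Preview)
The paper does not prove this theorem; it is quoted verbatim from \cite[Theorem~3.3]{Jung} and used as a black box throughout Section~2. So there is no in-paper proof to compare against.

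Your argument is correct and is essentially the standard one (and, to my knowledge, the one used in \cite{Jung}): rewrite $\clj$-symmetry as $W_{f,\vp}^{*}=\clj W_{f,\vp}\clj=W_{\tilde f,\tilde\vp}$, test on reproducing kernels, specialise $w=0$ to read off $f$, then separate variables to force the linear fractional form of $\vp$. The algebra in your separation step checks out: after cancelling the common factor $(1-\bar a_0 z)(1-\bar a_0\bar w)$ one does get $(1-\bar a_0 z)\,\bar w\,\overline{h(\bar z)}=(1-\bar a_0\bar w)\,\overline{h(w)}\,z$, and since $h(0)=0$ the quotients $\overline{h(\bar z)}/z$ and $\overline{h(w)}/\bar w$ extend holomorphically across the origin, so the constant $\lambda$ is well defined. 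Your converse is also fine: with $a_0\in\D$ the weight $f(z)=b/(1-a_0 z)$ lies in $H^\infty(\D)$, both $W_{f,\vp}$ and $W_{\tilde f,\tilde\vp}$ are bounded, and the kernel computation you outline gives the common value $\bar b\big/\bigl((1-\bar a_0 z)(1-\bar a_0\bar w)-\bar a_1\bar w z\bigr)$ on both sides, whence equality on a dense set and therefore everywhere.
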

\begin{lemma}\cite{Martin:Vukotic}\label{LFT}
	A linear fractional map $\phi$ of the form $\phi(z)=\frac{az+b}{cz+d}$;  $ad-bc\neq0$, maps $\D$ into itself if and only if
	\begin{equation} 
	|b\bar{d}-a\bar{c}|+|ad-bc|\leq|d|^2-|c|^2.
	\end{equation}
\end{lemma}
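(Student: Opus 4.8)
The plan is to reduce the self-map condition to a statement about the boundary circle together with the location of the pole, and then to translate that statement into the stated algebraic inequality by elementary estimates. Writing $\phi(z)=\frac{az+b}{cz+d}$, I would first argue that $\phi(\D)\subseteq\overline{\D}$ holds if and only if two things are true: the pole $-d/c$ lies outside $\overline{\D}$ (equivalently $|d|\ge|c|$, with the degenerate case $c=0$ placing the pole at $\infty$), and $|\phi(z)|\le 1$ for every $z\in\T$. The forward direction is clear, since a pole in $\overline{\D}$ would force $\phi$ to be unbounded there; for the converse I would invoke the fact that a Möbius map carries $\overline{\D}$ onto the closed disk bounded by the circle $\phi(\T)$, and that by convexity of $\overline{\D}$ this image disk is contained in $\overline{\D}$ exactly when its boundary circle $\phi(\T)$ is. The distinction between $\D$ and $\overline{\D}$ is then handled by the maximum modulus principle, since a nonconstant $\phi$ with $|\phi|\le 1$ on $\overline{\D}$ satisfies $|\phi|<1$ on $\D$.

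The second step is to make the boundary condition explicit. For $z\in\T$ one has $|z|=1$, so $|cz+d|^2-|az+b|^2 = (|c|^2+|d|^2-|a|^2-|b|^2)+2\,\mathrm{Re}\bigl((c\bar d-a\bar b)z\bigr)$, and minimizing the right-hand side over $z\in\T$ shows that $|\phi|\le 1$ on $\T$ is equivalent to
\begin{equation*}
|c|^2+|d|^2-|a|^2-|b|^2\ge 2\,|c\bar d-a\bar b|.
\end{equation*}
Note that this inequality automatically carries the positivity $|c|^2+|d|^2\ge|a|^2+|b|^2$.

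The final, and hardest, step is to show that this boundary inequality together with $|d|\ge|c|$ is equivalent to the single symmetric inequality $|b\bar d-a\bar c|+|ad-bc|\le|d|^2-|c|^2$ of the lemma. Here I would lean on the two algebraic identities
\begin{equation*}
(|c|^2-|a|^2)(|d|^2-|b|^2)-|c\bar d-a\bar b|^2=-|ad-bc|^2,\qquad |b\bar d-a\bar c|^2+(|d|^2-|c|^2)(|a|^2-|b|^2)=|ad-bc|^2,
\end{equation*}
which are verified by direct expansion. Squaring the boundary inequality and using the first identity turns it into $\bigl|(|d|^2-|b|^2)-(|c|^2-|a|^2)\bigr|\ge 2|ad-bc|$, while squaring the target inequality and using the second identity turns it into a condition of the same shape together with $|d|^2-|c|^2\ge|ad-bc|$. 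The main obstacle is the sign/branch bookkeeping in these equivalences: extracting square roots introduces a spurious branch, and I expect the real content of the proof to be showing that the pole condition $|d|>|c|$ together with $ad-bc\ne 0$ forces the correct branch, namely $(|d|^2-|b|^2)-(|c|^2-|a|^2)\ge 2|ad-bc|$ rather than its negative, thereby collapsing the two conditions into exactly the inequality of the lemma. I would also dispose separately of the degenerate cases $c=0$ and $|c|=|d|$, the latter being incompatible with $ad-bc\ne 0$.
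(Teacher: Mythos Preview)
The paper does not prove this lemma at all: it is simply quoted from Mart\'{\i}n and Vukoti\'{c} with a citation and no argument, so there is no in-paper proof to compare your sketch against.

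For what it is worth, your strategy---reduce the self-map condition to the conjunction of the pole condition $|d|>|c|$ and the boundary inequality $|c|^2+|d|^2-|a|^2-|b|^2\ge 2|c\bar d-a\bar b|$ via the maximum modulus principle, and then pass between that boundary inequality and the stated one using the two polarization identities you wrote down---is sound and is essentially how the result is proved in the literature. One small slip: your final remark that ``$|c|=|d|$ is incompatible with $ad-bc\ne 0$'' is false (e.g.\ $a=c=d=1$, $b=0$). What is true, and what you actually need, is that if $|c|=|d|$ and $ad-bc\ne 0$ then the pole $-d/c$ lies on $\T$, so $\phi$ is unbounded on $\D$ and both sides of the claimed equivalence fail; this boundary case is therefore vacuous rather than contradictory.
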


\begin{lemma}\cite[Lemma 4.8]{Jung}\label{DtoD-condition}
	Let $\varphi(z)=a_0+\frac{a_1 z}{1-a_0 z}$. Then $\varphi$ maps the open unit disc into itself if and only if $|a_0|<1$ and $2|a_0+\bar{a}_0(a_1-a_0^2)|\leq 1-|a_1-a_0^2|^2$.

	In particular, when $a_1 = a_0^2 = 0$, $\varphi$ maps the open unit disc into itself if and only if
	$|a_0 | \leq \frac{1}{2}$, and when $a_1 -a_0^2 = \pm 1$, $\varphi$ maps the open unit disc into itself if and only if
	$a_0$ is either real or purely imaginary.
\end{lemma}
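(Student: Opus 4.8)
The plan is to collapse $\varphi$ into a single linear fractional transformation and then reduce the self-map condition to a one-variable boundary inequality via the maximum modulus principle. First I would combine the two summands:
\[
\varphi(z) = a_0 + \frac{a_1 z}{1 - a_0 z} = \frac{a_0 + (a_1 - a_0^2)z}{1 - a_0 z},
\]
and abbreviate $w := a_1 - a_0^2$, so that $\varphi(z) = \frac{a_0 + wz}{1 - a_0 z}$ and $\varphi(0) = a_0$. This normal form is what makes the later computation transparent.

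Next I would pin down the role of $|a_0| < 1$. Since $\varphi(0) = a_0$, any self-map of $\D$ forces $a_0 \in \D$, which gives necessity; conversely $|a_0| < 1$ puts the only pole $z = 1/a_0$ strictly outside $\overline{\D}$ (the degenerate case $a_0 = 0$ giving the entire map $\varphi(z) = a_1 z$), so that $\varphi$ is analytic on $\overline{\D}$. With analyticity on $\overline{\D}$ secured, the maximum modulus principle lets me replace the containment $\varphi(\D) \subseteq \D$ by the boundary bound $|\varphi(e^{i\theta})| \le 1$ for every $\theta$: if the boundary values are bounded by $1$ then $\sup_{\overline{\D}}|\varphi| = \max_{\T}|\varphi| \le 1$, and the strict maximum principle promotes this to $|\varphi| < 1$ on $\D$ unless $\varphi$ is constant, in which case the constant value is $a_0 \in \D$.

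The computational core is the boundary inequality. For $|z| = 1$ I would clear denominators, writing $|\varphi(e^{i\theta})| \le 1 \iff |a_0 + w e^{i\theta}|^2 \le |1 - a_0 e^{i\theta}|^2$, expand both modulus-squares, and cancel the common term $|a_0|^2$. This collapses the condition to
\[
|w|^2 + 2\,\mathrm{Re}\!\big((a_0 + \bar a_0 w)e^{i\theta}\big) \le 1 \qquad \text{for all } \theta .
\]
Since $\mathrm{Re}\big((a_0 + \bar a_0 w)e^{i\theta}\big)$ attains its maximum value $|a_0 + \bar a_0 w|$ as the phase is chosen to align, the quantifier "for all $\theta$" is equivalent to the single inequality $2|a_0 + \bar a_0 w| \le 1 - |w|^2$; restoring $w = a_1 - a_0^2$ yields precisely the asserted estimate. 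The two special cases then fall out by substitution: $a_1 - a_0^2 = 0$ reduces the inequality to $2|a_0| \le 1$, i.e. $|a_0| \le \tfrac12$, while $a_1 - a_0^2 = \pm 1$ forces the right-hand side to vanish, so $a_0 + \bar a_0(\pm 1) = 0$, which reads $\mathrm{Re}(a_0) = 0$ for the sign $+$ (purely imaginary) and $\mathrm{Im}(a_0) = 0$ for the sign $-$ (real).

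I expect the only genuinely delicate point to be the reduction to the circle: one must justify both that the pole lies outside $\overline{\D}$ (so that $\varphi$ is continuous up to $\T$ and the maximum principle applies) and that "$|\varphi| \le 1$ on $\T$" delivers containment in the \emph{open} disc rather than merely in $\overline{\D}$, which is exactly where the strict maximum principle is needed. Everything after that is routine expansion followed by a single-variable maximization. One could instead insert the coefficients $a = w,\ b = a_0,\ c = -a_0,\ d = 1$ into Lemma~\ref{LFT}, but that route produces a superficially different inequality that then requires extra algebra to reconcile with the stated form, so I would favor the direct boundary computation.
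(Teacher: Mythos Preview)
The paper does not prove this lemma; it is simply quoted from \cite[Lemma~4.8]{Jung} without argument, so there is no in-paper proof to compare against. Your direct argument is correct: the normal form $\varphi(z)=(a_0+wz)/(1-a_0 z)$ with $w=a_1-a_0^2$, the observation that $\varphi(0)=a_0$ forces $|a_0|<1$ and that this in turn pushes the pole $1/a_0$ outside $\overline{\D}$, the reduction to $|\varphi|\le 1$ on $\T$ via the (strict) maximum modulus principle, the expansion of $|a_0+we^{i\theta}|^2\le|1-a_0 e^{i\theta}|^2$ to $|w|^2+2\,\mathrm{Re}\big((a_0+\bar a_0 w)e^{i\theta}\big)\le 1$, and the single-variable phase maximization all go through exactly as you describe. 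The two special cases are also read off correctly (with the first evidently intended as $a_1-a_0^2=0$ rather than the literal ``$a_1=a_0^2=0$'' printed in the statement). Your remark that routing through Lemma~\ref{LFT} would yield a cosmetically different inequality requiring extra algebra is accurate; the boundary computation you chose is the cleaner path.
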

\begin{proposition}\cite[Proposition 4.4]{Jung}\label{Eigenvalue_CS}
	Let $\varphi$ be an analytic self-map of $\D$ and let $f\in H^\infty(\D)$ be not identically zero on $\D$, where $\varphi(0)\neq 0,\,\varphi^\prime(0)\neq 0$, and $\varphi(\lambda)=\lambda$ for some $\lambda\in \D$. If $W_{f,\varphi}$ is complex symmetric with the conjugation $\mathcal{J}$, then
	\begin{equation*}
	g_j(z):=\frac{1}{1-\lambda z}\left(\frac{\lambda-z}{1-\lambda z} \right)^j
	\end{equation*}
	 is an eigenvector of $W_{f,\varphi}$ with respect to the eigenvalue $f(\lambda)\varphi^\prime(\lambda)^j $ for each non-negative integer $j$. 
\end{proposition}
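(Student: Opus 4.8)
The plan is to exploit the explicit description of $f$ and $\vp$ furnished by \Cref{CSWCO_charac}, together with the factorization $g_j = g_0\cdot u^j$, where
\[
g_0(z)=\frac{1}{1-\lambda z}, \qquad u(z)=\frac{\lambda-z}{1-\lambda z}.
\]
Since $W_{f,\vp}$ is a multiplication followed by a composition, $W_{f,\vp}g_j = f\cdot(g_0\circ\vp)\cdot(u\circ\vp)^j$, so the whole statement reduces to two facts: (b) the linearization identity $u\circ\vp=\vp'(\lambda)\,u$, and (a) the base case $W_{f,\vp}g_0=f(\lambda)\,g_0$. Granting these, $W_{f,\vp}g_j = f\cdot(g_0\circ\vp)\cdot\vp'(\lambda)^j u^j = f(\lambda)\,g_0\cdot\vp'(\lambda)^j u^j = f(\lambda)\vp'(\lambda)^j g_j$. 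Note first that $\lambda\neq 0$, since $\vp(0)=a_0\neq 0$ forces the fixed point to be nonzero, and that each $g_j\in H^\infty\subset H^2$ (the only pole of $g_0$ is at $1/\lambda\notin\overline{\D}$), so every expression is a genuine element on which $W_{f,\vp}$ acts.

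For the linearization (b) I would inspect the fixed-point equation $\vp(z)=z$. Writing $\vp(z)=a_0+\frac{a_1 z}{1-a_0 z}$ and clearing denominators yields the quadratic $a_0 z^2-(a_0^2-a_1+1)z+a_0=0$, whose two roots multiply to $1$; hence the second fixed point of $\vp$ is $\mu=1/\lambda$. The hypothesis $\vp'(0)=a_1\neq 0$ guarantees $\vp$ is a genuine (degree-one) Möbius transformation, so the classical normal form applies. Now $u$ is, up to the scalar $1/\lambda$, the Möbius map $T(z)=\frac{z-\lambda}{z-\mu}$ carrying the two fixed points of $\vp$ to $0$ and $\infty$. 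Conjugating $\vp$ by $T$ produces a Möbius transformation fixing $0$ and $\infty$, i.e.\ a dilation $w\mapsto\kappa w$, and a chain-rule computation at the origin identifies $\kappa$ with the multiplier $\vp'(\lambda)$. Therefore $T\circ\vp=\vp'(\lambda)\,T$, and since $u=\frac{1}{\lambda}T$ the same identity holds for $u$, giving $(u\circ\vp)^j=\vp'(\lambda)^j u^j$.

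For the base case (a) I would use the complex symmetry directly instead of computing. Observe that $g_0=\clj K_\lambda$, since $\clj K_\lambda(z)=\overline{K_\lambda(\bar z)}=\frac{1}{1-\lambda z}$. Because $\vp(\lambda)=\lambda$, the adjoint formula $W_{f,\vp}^*K_w=\overline{f(w)}K_{\vp(w)}$ gives $W_{f,\vp}^*K_\lambda=\overline{f(\lambda)}K_\lambda$. Feeding this into the $\clj$-symmetry relation $W_{f,\vp}=\clj W_{f,\vp}^*\clj$ and using the anti-linearity and involutivity of $\clj$,
\[
W_{f,\vp}g_0=\clj W_{f,\vp}^*\clj(\clj K_\lambda)=\clj W_{f,\vp}^*K_\lambda=\clj\big(\overline{f(\lambda)}K_\lambda\big)=f(\lambda)\,\clj K_\lambda=f(\lambda)\,g_0,
\]
which is exactly (a). (Alternatively, (a) can be verified by hand after substituting the explicit $f$ and $\vp$ and invoking the relation $a_1\lambda=(\lambda-a_0)(1-a_0\lambda)$ coming from $\vp(\lambda)=\lambda$.)

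I expect the main obstacle to be the linearization step (b): one must correctly pin down the second fixed point as $1/\lambda$ and check that the multiplier produced by the conjugation is $\vp'(\lambda)$ (and not $\vp'(1/\lambda)$ or a conjugate), since the entire exponent $\vp'(\lambda)^j$ in the eigenvalue depends on it. The base case, by contrast, is essentially forced by the complex-symmetry hypothesis through \Cref{CSWCO_charac}, and the rest is routine bookkeeping with linear fractional maps.
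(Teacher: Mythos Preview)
The paper does not supply its own proof of this proposition: it is quoted from \cite[Proposition~4.4]{Jung} and used as a black box in Section~2, so there is nothing here to compare your argument against.

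Your argument is nevertheless correct. The factorization $g_j=g_0\cdot u^j$ reduces the claim to the base case (a) and the linearization (b), and both are handled soundly. For (b), the fixed-point quadratic $a_0z^2-(a_0^2-a_1+1)z+a_0=0$ indeed has product of roots equal to $1$, so the second fixed point is $1/\lambda\notin\overline{\D}$ (distinct from $\lambda$ since $|\lambda|<1$); the determinant of $\vp$ as a M\"obius map is $a_1\neq 0$, so the normal-form argument goes through, and differentiating $T\circ\vp=\kappa\,T$ at $z=\lambda$ identifies $\kappa=\vp'(\lambda)$ correctly. Your treatment of (a), via $g_0=\clj K_\lambda$ together with $W_{f,\vp}^*K_\lambda=\overline{f(\lambda)}K_\lambda$ and the $\clj$-symmetry relation, is clean and avoids any computation with the explicit form of $f$.
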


\section{Anti-linear Toeplitz operators in the commutant of a composition operator}
B. Cload (cf. \cite[Theorem 2]{Cload:PhDThesis})  proved that if $\vp:\D\rightarrow\D$ be an analytic mapping which is neither an elliptic disc automorphism of finite periodicity nor the identity mapping and $f\in L^\infty$ such that $T_fC_\vp=C_\vp T_f$, then $f$ is analytic, that is, $T_f$ is an analytic Toeplitz operator.

 In this section, we aim to investigate operators $A\in \clb_{a}(H^2)$ that satisfy the following equations: $M^*_zAM_z=A$ and $AC_\vp=C_\vp A$. The proof follows a similar approach to related problems, but there is a notable difference due to the anti-linearity of $A$. This anti-linearity necessitates the introduction of an additional condition in the hypothesis to accommodate the unique properties of these operators.

\begin{theorem}
Let $\vp:\D\rightarrow\D$ be an analytic mapping which is neither an elliptic disc automorphism of finite periodicity nor the identity mapping. Let $f\in L^\infty$ such that $f(z)=\displaystyle\sum_{n=-\infty}^{\infty} c_nz^n$ with $c_0=0$ and $T_f\clj C_\vp=C_\vp T_f\clj$, then $f$ is analytic.
\end{theorem}

\begin{proof}
	The proof breaks into three parts depending on the nature of $\vp$.
\begin{enumerate}
	\item[{\bf Case 1:}] Suppose that $\vp$ is neither elliptic disc automorphism nor a constant. Let $a$ be the Denjoy-Wolff point of $\vp$. Denote $A=T_f\clj$. Let $f=f_1+f_2$, where $f_1\in (H^2)^\perp$ and $f_2\in H^2$. Then for all $l\geq 0$,
	
	\begin{equation*}
		\begin{split}
			A(z^l)=T_f\clj z^l=T_f z^l=P(z^l f)=c_{-l}+c_{-l+1}z+\cdots+c_{-1}z^{l-1}+z^l f_2
		\end{split}
	\end{equation*} Now, $AC_\vp=C_\vp A$ implies $C_{\vp^n}A=AC_{\vp^n}$.

	Thus for all $l\geq 0$, we have
	\begin{equation*}
		\begin{split}
			C_{\vp^n}Az^l&=AC_{\vp^n}z^l\\
			c_{-l}+c_{-l+1}\vp^n+\cdots+c_{-1}(\vp^n)^{l-1}+(\vp^n)^l f_2\circ \vp^n&=A(\vp^n)^l
		\end{split}
	\end{equation*}	
	
	Also, note that $AC_\vp 1=C_\vp A1$ which implies $f_2=f_2\circ \vp$. Thus we have 
	\begin{equation*}
		A(\vp^n)^l=	c_{-l}+c_{-l+1}\vp^n+\cdots+c_{-1}(\vp^n)^{l-1}+(\vp^n)^l f_2.
	\end{equation*}

	By taking inner product on both sides with $1$ and using the fact that for each fixed $l\geq 0$, the sequence $(\vp^n)^l$ coverges weakly to $a^l$ (cf. \cite[Lemma 2 ]{Cload:PhDThesis}), we obtain
	
	\begin{equation*}
	\bar{a}^lf_2(0)=c_{-l}+c_{-l+1}a+\cdots+c_{-1}a^{l-1}+a^l f_2(0).
	\end{equation*}
Now $l=1$, we get $\bar{a}f_2(0)=c_{-1}+af_2(0)$ and this implies $c_{-1}=f_2(0)(\bar{a}-a)$.

for $l=2$, we have $\bar{a}^2f_2(0)=c_{-2}+c_{-1}a+a^2f_2(0)$ then by substituting the value of $c_{-1}$, we get $c_{-2}=f_2(0)(\bar{a}^2-|a|^2)=f_2(0)\bar{a}(\bar{a}-a)$. Similarly, we can show that $c_{-l}=f_2(0)(\bar{a})^{l-1}(\bar{a}-a)$ for any $l\geq 1$. Since $f_2(0)=c_0=0$, we get $c_{-l}=0$ for all $l\geq 1$, and this implies $f$ is analytic.

\item[{\bf Case 2:}] Suppose $\vp$ is a nonzero constant that is, $\vp(z)=b$ for all $z\in \D$.	Then $AC_\vp 1=C_\vp A 1$ will imply $f_2=f_2(b)$, that is, $f_2$ is constant function in $H^2$. Since $f_2(0)=0$, $f_2$ is an identically zero function.
	
For any $g\in H^2$, 
\begin{equation*}
	\begin{split}
	C_\vp A g&=AC_\vp g\\
	C_\vp P(f\tilde{g})&=A g(b),\quad\text{where $\tilde{g}(z)=\overline{g(\bar{z})}$}\\
	(P(f\tilde{g}))(b)&= \overline{g(b)}f_2=0\\
	(P(f_1\tilde{g}))(b)&=0\\
	\end{split}
\end{equation*}	
	Note that if $g(z)=z^k$ then $\tilde{g}(z)=z^k$. Therefore, take $g$ to be $z^k$ successively, where $k\geq 1$ to conclude that $c_{-k}=0$ for all $k\geq 1$. This will imply $f=0$, an analytic function.

\item[{\bf Case 3:}] Let $\vp$ be an elliptic automorphism of infinite peridicity. First we assume that the fix point be $0$. Then by Schwarz's lemma $\vp(z)=e^{i\theta}z$, where $e^{in\theta}\neq 1$ for all $n\in \Z$. Let $A\in \clb_{a}(H^2)$ such that $AC_\vp=C_\vp A$. Let $\{z^n:n\geq 0\}$ be the standard orthonormal basis for $H^2$. Then we see that
\begin{equation*}
	\langle A C_\vp z^k,z^l\rangle=\langle Ae^{ik\theta} z^k,z^l\rangle=e^{-ik\theta}\langle A z^k,z^l\rangle=e^{-ik\theta} a_{lk}
\end{equation*}

\begin{equation*}
	\langle C_\vp Az^k,z^l\rangle=\left\langle C_\vp\sum_{m=0}^{\infty}a_{mk}z^m,z^l\right\rangle=\left\langle \sum_{m=0}^{\infty}a_{mk}e^{im\theta}z^m,z^l\right\rangle=a_{lk}e^{il\theta}
\end{equation*}

This implies $(e^{i(l+k)\theta}-1 )a_{lk}=0$. Which shows that $a_{lk}=0$ for all $l,\, k$. Thus, $A$ is an operator whose matrix representation has all entries equal to zero with respect to the orthonormal basis $\{z^n:n\geq 0\}$.

\begin{equation*}
0=a_{lk}=	\langle Az^k,z^l\rangle=\langle T_f\clj z^k,z^l\rangle=\langle T_f z^k,z^l\rangle
\end{equation*}
implies $f=0$.

Now suppose that the fix point be $b$ which is nonzero. Let $\alpha(z)=\frac{b-z}{1-\bar{b}z}$ for all $z\in \D$. Again by writing $f=f_1+f_2$ ans using the relation $AC_\vp 1=C_\vp A1$ implies $f_2=f_2\circ\vp$. Thus $f_2$ is constant as $\vp$ has infinite periodicity. In fact $f_2=0$ as $f_2(0)=0$. Thus $T_{f_1}\clj C_\vp=C_\vp T_{f_1}\clj$. Denote $A_1=T_{f_1}\clj $. Then $A_1 C_\vp=C_\vp A_1$ and thus $C_\alpha A_1 C_\alpha C_\alpha C_\vp C_\alpha=C_\alpha C_\vp C_\alpha C_\alpha A_1 C_\alpha $. Therefore, $C_\alpha A_1 C_\alpha $ commutes with $C_{\alpha\circ\vp\circ\alpha}$. Since $\alpha\circ\vp\circ\alpha$ is elliptic disc automorphism of infinite periodicity with fix point $0$, the previous argument implies that $f_1=0$. Thus, we conclude that $f=0$, an analytic map.
\end{enumerate}

\end{proof}

The following result demonstrates that the previous theorem cannot be extended to all elliptic disc automorphisms. The proof is similar to the one found in  \cite[Theorem 3]{Cload:PhDThesis}, and therefore, it is omitted here.
\begin{theorem}
	Let $\vp$ be an elliptic disc automorphism of period $q\, (\geq 2)$ with $\vp(0)=0$ and $f(z)=\displaystyle\sum_{n=-\infty}^{\infty}a_{n}z^{n}\in L^\infty$. Then $T_f\clj$ commutes with $C_\vp$ if and only if $f(z)=\displaystyle\sum_{n=-\infty}^{\infty}a_{nq}z^{nq}$.
\end{theorem}

\section{Description of commutant}
Throughout this section, we consider $W_{f,\varphi}$ is complex symmetric with the conjugation $\mathcal{J}$ and $\varphi$ is not an identity map. Because if $\varphi$ is an identity map then $W_{g,\psi}\in \{W_{f,\varphi}\}^\prime$ will always holds. That is, in view of Theorem \ref{CSWCO_charac}, we will always assume that $a_1\neq 1$.
\begin{lemma}\label{common fix point}
	Let $g\in H^\infty$ and $\psi$ be an analytic map of $\D$ into itself. Assume that $W_{g,\psi}\in \{W_{f,\varphi} \}^\prime$, where $W_{f,\varphi}$ is complex symmetric with the conjugation $\mathcal{J}$. If $\varphi$ has a fixed point in $\D$, then $\psi$ has the same fixed point with $\varphi$.
\end{lemma}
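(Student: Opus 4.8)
The plan is to transfer the operator identity $W_{g,\psi}W_{f,\varphi}=W_{f,\varphi}W_{g,\psi}$ into a functional identity by testing against reproducing kernels. Taking adjoints, the commutation relation is equivalent to $W_{f,\varphi}^{*}W_{g,\psi}^{*}=W_{g,\psi}^{*}W_{f,\varphi}^{*}$. Applying both sides to a kernel $K_{w}$ and repeatedly using the formula $W_{f,\varphi}^{*}K_{w}=\overline{f(w)}K_{\varphi(w)}$ (together with its analogue for $W_{g,\psi}^{*}$), I obtain, for every $w\in\D$,
\[
\overline{g(w)}\,\overline{f(\psi(w))}\,K_{\varphi(\psi(w))}=\overline{f(w)}\,\overline{g(\varphi(w))}\,K_{\psi(\varphi(w))}.
\]
This single identity encodes both the conclusion I want and the auxiliary composition relation I will need.

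First I would extract the composition relation $\varphi\circ\psi=\psi\circ\varphi$. Because $W_{f,\varphi}$ is complex symmetric with the conjugation $\mathcal{J}$, Theorem~\ref{CSWCO_charac} forces $f(z)=b/(1-a_{0}z)$ with $b=f(0)\neq0$, so $f$ has no zero in $\D$; in particular $\overline{f(w)}$ and $\overline{f(\psi(w))}$ are nonzero for all $w$. Assuming $g\not\equiv0$ (otherwise $W_{g,\psi}=0$ and the statement is vacuous), the zeros of $g$ are isolated, so $\{w:g(w)\neq0\}$ is dense and open in $\D$. At such a $w$ the left-hand side above is a nonzero multiple of $K_{\varphi(\psi(w))}$, so the right-hand side cannot vanish either; since $f(w)\neq0$ this gives $g(\varphi(w))\neq0$, and the linear independence of two distinct reproducing kernels then forces $\varphi(\psi(w))=\psi(\varphi(w))$. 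Thus $\varphi\circ\psi$ and $\psi\circ\varphi$ agree on a dense subset of $\D$, and by the identity theorem they coincide throughout $\D$.

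Finally I would use the hypothesis that $\varphi$ has a fixed point $\lambda\in\D$. Evaluating the relation $\varphi\circ\psi=\psi\circ\varphi$ at $\lambda$ gives $\varphi(\psi(\lambda))=\psi(\varphi(\lambda))=\psi(\lambda)$, so $\psi(\lambda)\in\D$ is again a fixed point of $\varphi$. It now suffices to invoke uniqueness of interior fixed points: a holomorphic self-map of $\D$ that is not the identity fixes at most one point of $\D$, since fixing two distinct interior points would, by the Schwarz--Pick lemma, force it to be the identity. As $\varphi$ is assumed throughout this section not to be the identity, $\lambda$ is its only fixed point in $\D$, and therefore $\psi(\lambda)=\lambda$.

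The main obstacle is the middle step: promoting the single kernel identity to the global relation $\varphi\circ\psi=\psi\circ\varphi$. The delicate points are that the coefficients must be shown nonzero---which rests on $f$ never vanishing, a fact supplied precisely by the complex-symmetry characterization---and that $g$ may have isolated zeros, which is absorbed by working on the dense set $\{g\neq0\}$ and then applying analytic continuation. Once this functional identity is secured, the fixed-point evaluation and the Schwarz--Pick uniqueness are routine.
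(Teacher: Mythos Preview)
Your argument is correct and, in fact, tidier than the paper's. Both proofs begin with the adjoint action on reproducing kernels, but from there they diverge. The paper splits into cases: first $\lambda=0$ versus $\lambda\neq0$, and then, within the latter, according to whether $W_{g,\psi}^{*}K_{\lambda}$ vanishes. In the nonvanishing subcase the paper observes that $W_{g,\psi}^{*}K_{\lambda}$ is an eigenvector of $W_{f,\varphi}^{*}$ for the eigenvalue $\overline{f(\lambda)}$, passes through the conjugation $\mathcal{J}$, and then invokes Proposition~\ref{Eigenvalue_CS} to identify the $f(\lambda)$-eigenspace of $W_{f,\varphi}$ as the line spanned by $K_{\bar\lambda}$; comparing kernels then yields $\psi(\lambda)=\lambda$ directly. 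Your route, by contrast, first establishes the global functional identity $\varphi\circ\psi=\psi\circ\varphi$ (carefully justified via the nonvanishing of $f$ and a density argument for $g$), evaluates it at $\lambda$, and then appeals to the Schwarz--Pick uniqueness of interior fixed points.

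What each buys: the paper's method exploits the full eigenstructure afforded by $\mathcal{J}$-symmetry, which is thematically consistent with the rest of the paper and feeds naturally into the subsequent theorem. Your method is more elementary---it bypasses Proposition~\ref{Eigenvalue_CS} entirely, avoids the case split, and makes transparent that the complex symmetry hypothesis enters only through the nonvanishing of $f$; indeed your argument would go through for any $W_{f,\varphi}$ with $f$ zero-free and $\varphi$ a non-identity self-map fixing a point of $\D$. One small remark: your dismissal of the case $g\equiv0$ as ``vacuous'' is really a convention (the zero operator lies in every commutant while $\psi$ is unconstrained), but the paper tacitly adopts the same convention, so this is not a defect relative to the original.
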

\begin{proof}
	Since $W_{f,\varphi}$ is $\mathcal{J}$-symmetric with the conjugation $\mathcal{J}f(z)=\overline{f(\bar{z})}$, it follows from Theorem \ref{CSWCO_charac} that 
	\begin{equation}\label{symbols_CS_structure}
	f(z)=\frac{b}{1-a_0 z},\quad\varphi(z)=a_0+\frac{a_1 z}{1-a_0 z},
	\end{equation} 
	where $a_0=\varphi(0),\,a_1=\varphi^\prime(0)$, and $b=f(0)$.
	
	Let $\lambda$ be a fixed point of $\varphi$ in $\D$. It is very clear that if $a_0=0$, then $\lambda=0$. When $a_0\neq 0$, then the fixed point $\lambda$ is given by
	\begin{equation}\label{root lambda}
	\lambda=\frac{2a_0}{1+a_0^2-a_1\mp\sqrt{(1+a_0^2-a_1)^2-4a_0^2}}.
	\end{equation}
	{\bf Case 1:} If $\lambda=0$, then $\varphi(z)=a_1z$. Since $W_{g,\psi}\in \{W_{f,\varphi}\}^\prime$, we have
	$$\psi(a_1z)=\psi(\varphi(z))=\varphi(\psi(z))=a_1\psi(z).$$
	Now, consider the power series $\psi(z)=\displaystyle\sum_{n=0}^{\infty}\beta_nz^n$, then we have 
	\begin{equation*}
	\displaystyle\sum_{n=0}^{\infty}\beta_na_1^nz^n=\sum_{n=0}^{\infty}a_1\beta_nz^n.
	\end{equation*}
	Hence $\beta_0=a_1\beta_0$. Since $a_1\neq 1$, we have $\beta_0=0$ and this implies that $\psi(0)=0$.
	
{\bf Case 2:} If $\lambda\neq 0$, then consider the kernel function $K_\lambda$, that is, $K_\lambda(z)=\frac{1}{1-\bar{\lambda}z}$ for all $z\in \D$. We know that $W_{f,\varphi}^*K_\lambda=\overline{f(\lambda)}K_{\varphi(\lambda)}$ and since $W_{g,\psi}\in \{W_{f,\varphi}\}^\prime$, we have
\begin{equation*}
W^*_{f,\varphi}W^*_{g,\psi}K_\lambda=W^*_{g,\psi}W^*_{f,\varphi}K_\lambda=\overline{f(\lambda)}W^*_{g,\psi}K_{\varphi(\lambda)}=\overline{f(\lambda)}W^*_{g,\psi}K_\lambda.
\end{equation*}

{\bf Case 2(i):} If $W^*_{g,\psi}K_\lambda=0$, then $K_\lambda$ is the eigenvector of $W^*_{g,\psi}$ corresponding to the eigenvalue $0$. Since $\varphi(\psi(z))=\psi(\varphi(z))$, we get $\varphi(\psi(\lambda))=\psi(\varphi(\lambda))=\psi(\lambda)$, and this implies that $\psi(\lambda)$ is a fixed point of $\varphi$, and hence $\psi(\lambda)=0$, or $\lambda$. Now if $\psi(\lambda)=0$, then $a_0=\varphi(0)=\varphi(\psi(\lambda))=\psi(\varphi(\lambda))=\psi(\lambda)=0$, a contradiction to the fact that $a_0\neq 0$. Therefore, $\psi(\lambda)=\lambda$.
	
{\bf Case 2(ii):} If $W^*_{g,\psi}K_\lambda\neq 0$, then $W^*_{g,\psi}K_\lambda$ is an eigenvector of $W^*_{f,\varphi}$ with an eigenvalue $\overline{f(\lambda)}$. Since $W_{f,\varphi}$ is $\mathcal{J}$-symmetric, we have $\mathcal{J}W^*_{g,\psi}K_\lambda$ is an eigenvector corresponding to the eigenvalue $f(\lambda)$. Therefore, by Proposition \ref{Eigenvalue_CS}, we get $\mathcal{J}W^*_{g,\psi}K_\lambda=\beta K_{\bar{\lambda}}$ for some nonzero complex number $\beta$, and this implies $g(\lambda)K_{\overline{\psi(\lambda)}}=\beta K_{\bar{\lambda}}$. Therefore, $\psi(\lambda)=\lambda$. Thus $\lambda$ is a fixed point of $\psi$. This completes the proof.
	
\end{proof}
\begin{lemma}\label{relation among the coefficients}
Let $d_0=\frac{\lambda (\alpha-1)}{\lambda^2\alpha-1}=d_1$, $d_2=\alpha\frac{(\lambda^2-1)^2}{(\lambda^2\alpha-1)^2}$, where $\alpha$ is any complex number and $\lambda$ as in Equation \ref{root lambda}. Then the following are true
\begin{enumerate}
	\item\label{relation1} $d_1+a_0(d_2-d_0d_1)=a_0+d_1(a_1-a_0^2)$.
	\item\label{relation2} $\bar{d}_0(d_2-d_0^2-1)=-\bar{\lambda}(\lambda^2+1)\frac{|1-\alpha|^2}{|\lambda^2\alpha-1|^2}$.
\end{enumerate}
\end{lemma}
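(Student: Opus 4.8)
The plan is to treat both identities as algebraic verifications in the variables $\lambda$ and $\alpha$, carrying along the fixed-point relation for $\varphi$ only in the place where it is actually needed. Throughout I abbreviate $D := \lambda^2\alpha - 1$, so that $d_0 = d_1 = \lambda(\alpha-1)/D$ and $d_2 = \alpha(\lambda^2-1)^2/D^2$. The single computation that unlocks both parts is the evaluation of $d_2 - d_0^2$ (and since $d_0 = d_1$ we have $d_0 d_1 = d_0^2$). Expanding the numerator $\alpha(\lambda^2-1)^2 - \lambda^2(\alpha-1)^2$ I expect the quartic terms to collapse under the factorization
\begin{equation*}
\alpha(\lambda^2-1)^2 - \lambda^2(\alpha-1)^2 = (\lambda^2-\alpha)(\lambda^2\alpha - 1) = (\lambda^2 - \alpha)D,
\end{equation*}
so that $d_2 - d_0^2 = (\lambda^2-\alpha)/D$. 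Subtracting $1$ and using $\lambda^2 - \alpha - D = (1-\alpha)(\lambda^2+1)$ then yields the clean form $d_2 - d_0^2 - 1 = (1-\alpha)(\lambda^2+1)/D$.

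For part \eqref{relation2} this already suffices. I would multiply the last expression by $\bar d_0 = \bar\lambda(\bar\alpha-1)/\bar D$, collect $\bar D D = |\lambda^2\alpha-1|^2$ in the denominator, and simplify the numerator using $(\bar\alpha-1)(1-\alpha) = -|1-\alpha|^2$; this produces exactly $-\bar\lambda(\lambda^2+1)|1-\alpha|^2/|\lambda^2\alpha-1|^2$, which is the claimed identity. Note that no property of $\varphi$ enters here: part \eqref{relation2} is a pure identity in $\lambda$ and $\alpha$.

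Part \eqref{relation1} is where the geometry enters. Since $\lambda$ is a fixed point of $\varphi(z) = a_0 + a_1 z/(1-a_0 z)$, clearing denominators in $\varphi(\lambda) = \lambda$ yields the quadratic $a_0\lambda^2 - (1+a_0^2 - a_1)\lambda + a_0 = 0$, which (as $a_0,\lambda\neq 0$ in the regime of Equation \ref{root lambda}) rearranges to $a_1 - a_0^2 = 1 - a_0(\lambda^2+1)/\lambda$. I would substitute this into the right-hand side $a_0 + d_1(a_1 - a_0^2)$ and substitute $d_2 - d_0 d_1 = (\lambda^2-\alpha)/D$ into the left-hand side. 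Forming the difference of the two sides, the bare $d_1$ terms cancel and an overall factor of $a_0$ comes out, leaving the bracket
\begin{equation*}
\frac{\lambda^2-\alpha}{D} - 1 + \frac{d_1(\lambda^2+1)}{\lambda} = \frac{(1-\alpha)(\lambda^2+1)}{D} + \frac{(\alpha-1)(\lambda^2+1)}{D} = 0,
\end{equation*}
where I have inserted $d_1/\lambda = (\alpha-1)/D$ and the value of $(\lambda^2-\alpha)/D - 1$ computed above. The telescoping $(1-\alpha)+(\alpha-1)=0$ closes the identity.

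The main obstacle is purely bookkeeping: the quartic factorization $\alpha(\lambda^2-1)^2 - \lambda^2(\alpha-1)^2 = (\lambda^2-\alpha)(\lambda^2\alpha-1)$ is the one non-obvious cancellation, and once it is in hand both parts reduce to recognizing the final vanishing sum. The only conceptual point worth flagging is that part \eqref{relation1}, unlike part \eqref{relation2}, cannot be obtained by formal manipulation of the $d_i$ alone: it genuinely requires the fixed-point equation $a_0\lambda^2 - (1+a_0^2-a_1)\lambda + a_0 = 0$ to eliminate $a_1 - a_0^2$.
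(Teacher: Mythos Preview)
Your proposal is correct and follows essentially the same route as the paper: both hinge on the simplification $d_2-d_0^2-1=(1-\alpha)(\lambda^2+1)/(\lambda^2\alpha-1)$, and both reduce part~\eqref{relation1} to the vanishing of $a_0(\lambda^2+1)-\lambda(1+a_0^2-a_1)$. The only difference is cosmetic: the paper verifies that last vanishing by substituting the explicit quadratic-formula expression for $\lambda$ from Equation~\ref{root lambda} and watching the radicals cancel, whereas you invoke the fixed-point relation $a_0\lambda^2-(1+a_0^2-a_1)\lambda+a_0=0$ directly, which is cleaner and avoids the radical bookkeeping. Part~\eqref{relation2} is handled identically in both.
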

\begin{proof}
{\bf Proof of (\ref{relation1}):}  To prove the relation in (\ref{relation1}), it is enough to prove $a_0(d_2-d_0d_1-1)-d_1(a_1-a_0^2-1)=0$.
Now
\begin{equation*}
\begin{split}
a_0(d_2-d_0d_1-1)&=a_0(d_2-d_0^2-1)\\
&=a_0\left(\frac{\lambda^2-\alpha}{\lambda^2\alpha-1}-1\right)\\
&=a_0\frac{(1-\alpha)(\lambda^2+1)}{\lambda^2\alpha-1}
\end{split}
\end{equation*}
and
\begin{equation*}
\begin{split}
d_1(a_1-a_0^2-1)=\frac{\lambda(\alpha-1)(a_1-a_0^2-1)}{\lambda^2\alpha-1}.
\end{split}
\end{equation*}
Therefore,
\begin{equation*}
\begin{split}
&a_0(d_2-d_0d_1-1)-d_1(a_1-a_0^2-1)\\&=a_0\frac{(1-\alpha)(\lambda^2+1)}{\lambda^2\alpha-1}-\frac{\lambda(\alpha-1)(a_1-a_0^2-1)}{\lambda^2\alpha-1}\\
&=\frac{1-\alpha}{\lambda^2\alpha-1}\left[a_0(\lambda^2+1) +\lambda(a_1-a_0^2-1)\right]\\
&=\frac{1-\alpha}{\lambda^2\alpha-1}\left[a_0(\lambda^2+1) -\lambda(1+a_0^2-a_1)\right]\\
&=\frac{1-\alpha}{\lambda^2\alpha-1}\left[ a_0\Bigg\{\left(\frac{1+a_0^2-a_1\pm\sqrt{(1+a_0^2-a_1)^2-4a_0^2}}{2a_0} \right)^2+1\Bigg\}\right.\\&\hspace{2cm}\left.-\frac{1+a_0^2-a_1\pm\sqrt{(1+a_0^2-a_1)^2-4a_0^2}}{2a_0}(1+a_0^2-a_1)\right]\\
&=\frac{1-\alpha}{\lambda^2\alpha-1}\left[\frac{(1+a_0^2-a_1)^2\pm(1+a_0^2-a_1)\sqrt{(1+a_0^2-a_1)^2-4a_0^2}}{2a_0} \right.\\&\hspace{2cm}\left.-\frac{1+a_0^2-a_1\pm\sqrt{(1+a_0^2-a_1)^2-4a_0^2}}{2a_0}(1+a_0^2-a_1)\right]\\
&=0.
\end{split}
\end{equation*}
This completes the proof of (\ref{relation1}).

{\bf Proof of (\ref{relation2}):} We have 	
\begin{equation}
\begin{split}
d_2-d_0^2-1&=\alpha\frac{(\lambda^2-1)^2}{(\lambda^2\alpha-1)^2}-\frac{\lambda^2 (\alpha-1)^2}{(\lambda^2\alpha-1)^2}-1\\
&=\frac{\alpha\lambda^4-2\lambda^2\alpha+\alpha-\lambda^2\alpha^2+2\lambda^2\alpha-\lambda^2}{(\lambda^2\alpha-1)^2}-1\\
&=\frac{\alpha\lambda^4+\alpha-\lambda^2\alpha^2-\lambda^2}{(\lambda^2\alpha-1)^2}-1\\
&=\frac{\lambda^2-\alpha}{\lambda^2\alpha-1}-1\\
&=\frac{\lambda^2-\alpha-\lambda^2\alpha+1}{\lambda^2\alpha-1}\\
&=\frac{(1-\alpha)(\lambda^2+1)}{\lambda^2\alpha-1}.
\end{split}
\end{equation}
Therefore,
\begin{equation*}
\bar{d}_0(d_2-d_0^2-1)=\frac{\bar{\lambda}(\bar{\alpha}-1)}{\overline{\lambda^2\alpha-1}}\times\frac{(1-\alpha)(\lambda^2+1)}{\lambda^2\alpha-1}=-\bar{\lambda}(\lambda^2+1)\frac{|1-\alpha|^2}{|\lambda^2\alpha-1|^2}
\end{equation*}
\end{proof}
\begin{remark}\label{lambda real}
	$\bar{d}_0(d_2-d_0^2-1)=d_0(\overline{d_2-d_0^2}-1)$ if and only if $\lambda$ is real.
\end{remark}
\begin{theorem}
	Let $g\in H^\infty$ and let $\psi$ be an analytic map of $\D$ into itself. Assume that $W_{f,\varphi}$
	is complex symmetric with the conjugation $\mathcal{J}$ on $H^2(\D)$ and $\varphi$, not an elliptic automorphim, has a fixed point $\lambda$ in $\D$ . Then
	$W_{g,\psi}\in \{W_{f,\varphi}\}^\prime$ if and only if $W_{g,\psi}$ has the following symbol functions; for $\lambda\neq 0$,
	\begin{equation*}
	\psi(z)=\frac{(\lambda^2-\alpha)z+\lambda (\alpha-1)}{\lambda (1-\alpha)z+(\lambda^2\alpha-1)}\quad\text{and}\quad g(z)=g(\lambda)\frac{\lambda^2-1}{\lambda (1-\alpha)z+(\lambda^2\alpha-1)}
	\end{equation*}
	and for $\lambda=0$, 
	\begin{equation*}
	\psi(z)=\alpha z\quad\text{and}\quad g(z)=g(0)
	\end{equation*}
	for some $\alpha\in \mathbb{C}$.
\end{theorem}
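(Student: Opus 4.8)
The plan is to convert the operator identity $W_{f,\varphi}W_{g,\psi}=W_{g,\psi}W_{f,\varphi}$ into two scalar functional equations by passing to adjoints and testing against reproducing kernels. Using $W^*_{f,\varphi}K_w=\overline{f(w)}K_{\varphi(w)}$ and the analogous formula for $W_{g,\psi}$, the equality $W^*_{g,\psi}W^*_{f,\varphi}K_w=W^*_{f,\varphi}W^*_{g,\psi}K_w$ reads
\begin{equation*}
\overline{f(w)g(\varphi(w))}\,K_{\psi(\varphi(w))}=\overline{g(w)f(\psi(w))}\,K_{\varphi(\psi(w))},\qquad w\in\D .
\end{equation*}
Since $f(z)=b/(1-a_0z)$ never vanishes on $\D$, for all $w$ off a discrete set both scalar coefficients are nonzero (assuming $g\not\equiv 0$; the case $g\equiv 0$ is trivial), so linear independence of kernels at distinct points forces first the commuting relation $(\mathrm A)\colon\ \varphi\circ\psi=\psi\circ\varphi$ and then, by matching coefficients, the weight relation $(\mathrm B)\colon\ f\cdot(g\circ\varphi)=g\cdot(f\circ\psi)$, both extending to all of $\D$ by analyticity. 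Everything is reduced to solving (A) and (B).

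Next I would pin down $\psi$ from (A). By Lemma \ref{common fix point} we have $\psi(\lambda)=\lambda$. Introduce the disc involution $\sigma(z)=\frac{\lambda-z}{1-\lambda z}$, which satisfies $\sigma\circ\sigma=\mathrm{id}$, sends $\lambda\mapsto 0$, and sends the second (exterior) fixed point $1/\lambda$ of $\varphi$ to $\infty$; consequently $\sigma\circ\varphi\circ\sigma$ is the linear map $w\mapsto\mu w$ with $\mu=\varphi'(\lambda)$. Because $\varphi$ has an interior fixed point and is not an elliptic automorphism (so in particular not an automorphism and, being nonconstant, has $\mu\neq 0$), the Schwarz--Pick lemma gives $0<|\mu|<1$. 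Conjugating (A), the map $\widetilde\psi:=\sigma\circ\psi\circ\sigma$ fixes $0$ and commutes with $w\mapsto\mu w$; comparing Taylor coefficients in $\widetilde\psi(\mu w)=\mu\widetilde\psi(w)$ yields $c_n(\mu^{\,n-1}-1)=0$, and since $0<|\mu|<1$ forces $\mu^{\,n-1}\neq 1$ for $n\ge 2$, all higher coefficients vanish. Hence $\widetilde\psi(w)=\alpha w$ with $\alpha:=\psi'(\lambda)$, and unwinding via $\psi=\sigma\circ\widetilde\psi\circ\sigma$ gives exactly the stated Möbius form; the degenerate case $\lambda=0$ (where $\varphi(z)=a_1z$) is the same computation with $\sigma(z)=-z$ and yields $\psi(z)=\alpha z$.

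With $\psi$ known, I would solve (B) for $g$. Relation (B) becomes $g(\varphi(w))/(1-a_0w)=g(w)/(1-a_0\psi(w))$, a Schröder-type equation $g\circ\varphi=R\,g$. Any two nonvanishing solutions differ by a $\varphi$-invariant analytic function, which must be constant since $\lambda$ is the Denjoy--Wolff point of $\varphi$ and $\varphi^{(n)}\to\lambda$ locally uniformly. It therefore suffices to exhibit one solution: I claim $g(z)=C/\ell(z)$ with $\ell(z)=\lambda(1-\alpha)z+(\lambda^2\alpha-1)$ works, and verifying this reduces precisely to the algebraic identity $(1-a_0\psi(w))\ell(w)=(1-a_0w)\ell(\varphi(w))$, whose coefficient comparison is exactly Lemma \ref{relation among the coefficients}(\ref{relation1}). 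Normalizing at $z=\lambda$, where $\ell(\lambda)=\lambda^2-1$, fixes $C=g(\lambda)(\lambda^2-1)$ and produces the asserted formula for $g$.

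For the converse I would run this backward: $\varphi$ and $\psi$ are the members $\Phi_\mu,\Phi_\alpha$ of the abelian family $\Phi_c(z):=\sigma(c\,\sigma(z))$, so $\Phi_\mu\circ\Phi_\alpha=\Phi_{\mu\alpha}=\Phi_\alpha\circ\Phi_\mu$ gives (A) for free, while (B) holds by the same identity; since $\psi$ is assumed to map $\D$ into itself and $g\in H^\infty$, the operator $W_{g,\psi}=M_gC_\psi$ is bounded, and (A) together with (B) give $W_{f,\varphi}W_{g,\psi}h=W_{g,\psi}W_{f,\varphi}h$ upon evaluating both sides as $f\cdot(g\circ\varphi)\cdot(h\circ\psi\circ\varphi)$ and $g\cdot(f\circ\psi)\cdot(h\circ\varphi\circ\psi)$. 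The main obstacle is the linearization step: forcing an a priori arbitrary analytic self-map $\psi$ commuting with $\varphi$ to be the single Möbius map $\Phi_\alpha$. This is exactly where non-ellipticity is used (to guarantee $0<|\mu|<1$, hence $\mu^{\,n-1}\neq 1$), and where one must locate the exterior fixed point $1/\lambda$ and invoke Denjoy--Wolff convergence; a secondary technical point is checking that $\ell$ is zero-free on $\D$, so that the resulting $g$ indeed lies in $H^\infty$.
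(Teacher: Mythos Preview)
Your argument is correct but follows a genuinely different route from the paper. The paper never writes down the two global functional equations; instead it works with two \emph{specific} eigenvectors of $W_{f,\varphi}$ supplied by Proposition~\ref{Eigenvalue_CS}, namely $K_{\bar\lambda}$ (eigenvalue $f(\lambda)$) and $\eta(z)=\frac{1}{1-\lambda z}\frac{\lambda-z}{1-\lambda z}$ (eigenvalue $f(\lambda)\varphi'(\lambda)$), shows that $W_{g,\psi}$ must send each to a scalar multiple of itself, and then reads off $\psi$ and $g$ directly from the two scalar identities $W_{g,\psi}K_{\bar\lambda}=\gamma_1K_{\bar\lambda}$ and $W_{g,\psi}\eta=\gamma_2\eta$. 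The converse is done by a bare-hands coefficient check using Lemma~\ref{relation among the coefficients}(\ref{relation1}), essentially the same identity you invoke.

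By contrast, you extract $\varphi\circ\psi=\psi\circ\varphi$ and $f\cdot(g\circ\varphi)=g\cdot(f\circ\psi)$ from the kernel formula at \emph{every} point, then linearize via the M\"obius involution $\sigma(z)=(\lambda-z)/(1-\lambda z)$ and use $|\varphi'(\lambda)|<1$ (Schwarz--Pick plus non-ellipticity) to kill the higher Taylor coefficients of $\sigma\circ\psi\circ\sigma$; you then solve for $g$ by exhibiting one solution $1/\ell$ and invoking Denjoy--Wolff to force uniqueness up to a constant. This is more dynamical and arguably more transparent about \emph{why} non-ellipticity is needed (it gives $\mu^{n-1}\neq 1$), and it does not rely on the eigenvector description of Proposition~\ref{Eigenvalue_CS}. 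The paper's approach, on the other hand, exploits the $\clj$-symmetry more directly and avoids any iteration argument. One small point worth tightening in your write-up: for non-real $\lambda$ the map $\sigma$ is not a disc automorphism, so $\widetilde\psi=\sigma\circ\psi\circ\sigma$ is only defined on the open set $\sigma(\D)$; your Taylor comparison is valid on a neighborhood of $0\in\sigma(\D)$ and then extends by analytic continuation, but you should say so explicitly. Also, once you have $\varphi\circ\psi=\psi\circ\varphi$ you do not really need Lemma~\ref{common fix point}: $\psi(\lambda)$ is a fixed point of $\varphi$ in $\D$, and since the other fixed point $1/\lambda$ lies outside $\overline\D$, $\psi(\lambda)=\lambda$ is immediate.
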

\begin{proof}
	Let $\lambda\neq 0$. Since 	$W_{g,\psi}\in \{W_{f,\varphi}\}^\prime$, we have 
	\begin{equation*}
	\begin{split}
	W_{f,\varphi}W_{g,\psi}K_{\bar{\lambda}}&=W_{g,\psi}W_{f,\varphi}K_{\bar{\lambda}}\\
	&=W_{g,\psi}\mathcal{J}W^*_{f,\varphi}\mathcal{J}K_{\bar{\lambda}}\\
	&=W_{g,\psi}\mathcal{J}W^*_{f,\varphi}K_{\lambda}\\
	&=f(\lambda)W_{g,\psi}K_{\bar{\lambda}}.
	\end{split}
	\end{equation*}
	
If $W_{g,\psi}K_{\bar{\lambda}}=0$, then $K_{\bar{\lambda}}$ is an eigenvector of $W_{g,\psi}$ corresponding to an eigenvalue $0$.

If $W_{g,\psi}K_{\bar{\lambda}}\neq0$, then If $W_{g,\psi}K_{\bar{\lambda}}$ is an eigenvector of $W_{f,\varphi}$ corresponding to the eigenvalue $f(\lambda)$. Then by Proposition \ref{Eigenvalue_CS}, we have $W_{g,\psi}K_{\bar{\lambda}}=\gamma_1 K_{\bar{\lambda}}$ for some nonzero $\gamma_1\in \mathbb{C}$. Again by Proposition \ref{Eigenvalue_CS}, the function 
$\eta=\frac{1}{1-\lambda z}\left(\frac{\lambda-z}{1-\lambda z} \right)$ is an eigenvector of $W_{f,\varphi}$ corresponding to the eigenvalue $f(\lambda)\varphi^\prime(\lambda)$. Since 	$W_{g,\psi}\in \{W_{f,\varphi}\}^\prime$, we have
\begin{equation*}
W_{f,\varphi}W_{g,\psi}\eta=W_{g,\psi}W_{f,\varphi}\eta=f(\lambda)\varphi^\prime(\lambda)W_{g,\psi}\eta.
\end{equation*}
This shows that $W_{g,\psi}\eta$ is an eigenvector corresponding to the eigenvalue $f(\lambda)\varphi^\prime(\lambda)$. Therefore, $W_{g,\psi}\eta=\gamma_2\eta$. Thus we have 
\begin{equation}
\begin{split}
W_{g,\psi}\frac{1}{1-\lambda z}\left(\frac{\lambda-z}{1-\lambda z} \right)&=\gamma_2\frac{1}{1-\lambda z}\left(\frac{\lambda-z}{1-\lambda z} \right)\\
g(z)\frac{1}{1-\lambda \psi(z)}\left(\frac{\lambda-\psi(z)}{1-\lambda \psi(z)} \right)&=\gamma_2\frac{1}{1-\lambda z}\left(\frac{\lambda-z}{1-\lambda z} \right)\\
W_{g,\psi}K_{\bar{\lambda}}(z)\left(\frac{\lambda-\psi(z)}{1-\lambda \psi(z)} \right)&=\gamma_2\frac{1}{1-\lambda z}\left(\frac{\lambda-z}{1-\lambda z} \right)\\
\gamma_1 K_{\bar{\lambda}}(z)\left(\frac{\lambda-\psi(z)}{1-\lambda \psi(z)} \right)&=\gamma_2\frac{1}{1-\lambda z}\left(\frac{\lambda-z}{1-\lambda z} \right)\\
\gamma_1 K_{\bar{\lambda}}(z)\left(\frac{\lambda-\psi(z)}{1-\lambda \psi(z)} \right)&=\gamma_2K_{\bar{\lambda}}(z)\left(\frac{\lambda-z}{1-\lambda z} \right)\\
 \left(\frac{\lambda-\psi(z)}{1-\lambda \psi(z)} \right)&=\alpha \left(\frac{\lambda-z}{1-\lambda z} \right).
\end{split}
\end{equation}	
Thus we conclude that $\psi(z)=\frac{(\lambda^2-\alpha)z+\lambda (\alpha-1)}{\lambda (1-\alpha)z+(\lambda^2\alpha-1)}$.

 Since $W_{g,\psi}K_{\bar{\lambda}}=\gamma_1 K_{\bar{\lambda}}$ for some nonzero $\gamma_1\in \mathbb{C}$, for all $z\in \D$, we have 
 
 \begin{equation*}
 \frac{g(z)}{1-\lambda \psi(z)}=\frac{\gamma_1}{1-\lambda z}.
 \end{equation*}
 If we put $z=\lambda$, we have
 \begin{equation*}
 \frac{g(\lambda)}{1-\lambda \psi(\lambda)}=\frac{\gamma_1}{1-\lambda^2 }.
 \end{equation*}
 Thus by Lemma \ref{common fix point}, we have $\gamma_1=g(\lambda)$. Therefore,
 \begin{equation*}
 \begin{split}
 g(z)&=g(\lambda)\frac{1-\lambda \psi(z)}{1-\lambda z}\\
 &=\frac{g(\lambda)}{1-\lambda z}(1-\lambda \psi(z))\\
 &=\frac{g(\lambda)}{1-\lambda z}\left(1-\lambda \frac{(\lambda^2-\alpha)z+\lambda (\alpha-1)}{\lambda (1-\alpha)z+(\lambda^2\alpha-1)}\right)\\
 &=g(\lambda)\frac{\lambda^2-1}{\lambda (1-\alpha)z+(\lambda^2\alpha-1)}
 \end{split}
 \end{equation*}
 Now set $d_0=\frac{\lambda (\alpha-1)}{\lambda^2\alpha-1}=d_1$, $d_2=\alpha\frac{(\lambda^2-1)^2}{(\lambda^2\alpha-1)^2}$, and $d_3=\frac{\lambda^2-1}{\lambda^2\alpha-1}$, then 
 
 \begin{equation}
     \psi(z)=d_0+\frac{d_2z}{1-d_1z}\quad\text{and}\quad g(z)=g(\lambda)\frac{d_3}{1-d_1z}.
 \end{equation}

  On the other hand, consider the converse assertions to be true. Since $W_{g, \psi} W_{f, \varphi}=$ $M_{g(f\circ \psi)} C_{\varphi \circ \psi}$ and $W_{f, \varphi} W_{g, \psi}=M_{f(g \circ \phi)} C_{\psi \circ \phi}$, it suffices to show that $g(f \circ \psi)=f(g \circ \varphi)$ and $\varphi \circ \psi=\psi \circ \varphi$. It is clear when $\lambda=0$. Assume $\lambda \neq 0$.
 
 \begin{equation}
 \begin{split}
 (g(f \circ \psi))(z) &=\frac{g(\lambda) d_{3}}{1-d_{1} z} \times\frac{f(0)}{1-a_{0} \psi(z)} \\
 &=\frac{g(\lambda) d_{3}}{1-d_{1} z}\times \frac{f(0)}{1-a_{0} \frac{d_{0}+\left(d_{2}-d_{0} d_{1}\right) z}{1-d_{1} z}} \\
 &=\frac{g(\lambda) d_{3} f(0)}{\left(1-a_{0} d_{0}\right)-\left[d_{1}+a_{0}\left(d_{2}-d_{0} d_{1}\right)\right] z} .
 \end{split}
 \end{equation}
 
  \begin{equation}
  \begin{split}
  (f(g \circ \varphi))(z) &=\frac{f(0)}{1-a_{0} z} \times\frac{g(\lambda) d_{3}}{1-d_{1} \varphi(z)} \\ 
   &=\frac{g(\lambda) d_{3} f(0)}{\left(1-a_0d_{1}\right)-\left[a_{0}+d_{1}\left(a_{1}-a_{0}^{2}\right)\right] z}
  \end{split}
   \end{equation}
 Therefore, by using (\ref{relation1}) of Lemma \ref{relation among the coefficients}, we conclude that $(g(f \circ \psi))(z)=(f(g \circ \varphi))(z)$.
   
  \begin{equation}
  \begin{split}
 (\varphi \circ \psi)(z) &=\frac{a_{0}+(a_{1}- a_0^2) \psi(z)}{1-a_0\psi(z)} \\
 &=\frac{a_{0}+\left(a_{1}-a_0^{2}\right) d_{0}+\left[-a_{0} d_{1}+\left(a_{1}-a_0^{2}\right)\left(d_{2}-d_{0} d_{1}\right)\right] z}{1-a_0 d_{0}+\left[-d_{1}-a_0\left(d_{2}-d_0 d_{1}\right)\right] z} .
  \end{split}
  \end{equation}

  and
  \begin{equation}
 \begin{split}
  (\psi \circ \varphi)(z) &=\frac{d_{0}+\left(d_{2}-d_{0} d_{1}\right) \varphi(z)}{1-d_{1} \varphi(z)}\\
  &=\frac{d_{0}+\left(d_{2}-d_{0} d_{1}\right) \frac{a_{0}+\left(a_{1}-a_0^{2}\right) z}{1-a_{0} z}}{1-d_{1} \frac{a_{0}+\left(a_{1}-a_0^{2}\right) z}{1-a_0 z}} \\
  &=\frac{d_{0}+\left(d_{2}-d_{0} d_{1}\right) a_{0}+\left[- a_0d_{0}+\left(d_{2}-d_{0} d_{1}\right)\left(a_{1}-a_0^{2}\right)\right] z}{1-a_{0}d_{1} +\left[-a_0-d_{1}\left(a_{1}-a_0^{2}\right)\right] z} .
 \end{split}
  \end{equation} 
   
   Again using (\ref{relation1}) of Lemma \ref{relation among the coefficients}, we obtain $(\varphi \circ \psi)(z)=(\psi \circ \varphi)(z)$. This completes the proof.

\end{proof}

Using \cite[Remark 4.8]{TLe_severalvariables} and Remark \ref{lambda real}, we can easily prove the following:

\begin{corollary}
	Let $g\in H^\infty$ and $\psi$ be an analytic map of $\D$ into itself. Assume that $W_{g,\psi}\in \{W_{f,\varphi}\}^\prime$, where $W_{f,\varphi}$ is complex symmetric with the conjugation $\mathcal{J}$. If $\varphi$ is not an elliptic automorphism  and has a fixed point, say $\lambda$ in $\D$. Then the following holds true:
	\begin{enumerate}
		\item\label{normal} If $\lambda$ is real, then $W_{g,\psi}$ is normal.
		\item\label{self-adjoint} If both $\lambda$ and $\alpha$ are real, then $W_{g,\psi}$ is self-adjoint.
		\item\label{symmetric} If $|d_0|<1$ and $2|d_0+\bar{d}_0\left(d_2-d^2_0\right)|\leq 1-|d_2-d_0^2|$, then $W_{g,\psi}$ is $\mathcal{J}$-symmetric.
	\end{enumerate}
\end{corollary}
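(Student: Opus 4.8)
The plan is to read off, from the conclusion of the theorem, the explicit linear-fractional symbols of $W_{g,\psi}$ and then to verify for those symbols the three known coefficient criteria: normality and self-adjointness via \cite[Remark 4.8]{TLe_severalvariables} together with Theorem \ref{SA_WCO_WHrady}, and $\mathcal{J}$-symmetry via Theorem \ref{CSWCO_charac}. Thus I would first record that, for $\lambda\neq 0$, we have $\psi(z)=d_0+\frac{d_2 z}{1-d_1 z}$ and $g(z)=g(\lambda)\frac{d_3}{1-d_1 z}$ with $d_1=d_0=\frac{\lambda(\alpha-1)}{\lambda^2\alpha-1}$, $d_2=\alpha\frac{(\lambda^2-1)^2}{(\lambda^2\alpha-1)^2}$ and $d_3=\frac{\lambda^2-1}{\lambda^2\alpha-1}$. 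Since $d_1=d_0$, it follows that $\psi(0)=d_0$, $\psi'(0)=d_2$ and $g(0)=g(\lambda)d_3$, so that $\psi(z)=\psi(0)+\frac{\psi'(0)z}{1-\psi(0)z}$ and $g(z)=\frac{g(0)}{1-\psi(0)z}$; this is precisely the shape on which all three criteria are phrased. The degenerate case $\lambda=0$ gives $\psi(z)=\alpha z$ and $g(z)=g(0)$, so $W_{g,\psi}=g(0)C_{\alpha z}$ is diagonal in the monomial basis and all three assertions are immediate there; I would dispose of it at the outset and assume $\lambda\neq 0$ throughout.

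For (\ref{normal}) I would invoke the normality criterion of \cite[Remark 4.8]{TLe_severalvariables}: a weighted composition operator with symbols of the above form is normal exactly when its coefficients satisfy $\bar d_0(d_2-d_0^2-1)=d_0(\overline{d_2-d_0^2}-1)$. By Remark \ref{lambda real}, whose verification rests on part (\ref{relation2}) of Lemma \ref{relation among the coefficients}, this identity holds if and only if $\lambda$ is real. Hence $\lambda$ real forces $W_{g,\psi}$ to be normal.

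For (\ref{self-adjoint}) I would add the hypothesis that $\alpha$ is real and observe that then $d_0,d_1,d_2,d_3$ are all real, each being a ratio of polynomials in the real numbers $\lambda$ and $\alpha$. Consequently $\bar d_0=d_0$, and the symbols become $\psi(z)=d_0+\frac{d_2 z}{1-\bar d_0 z}$ and $g(z)=\frac{g(0)}{1-\bar d_0 z}$ with $\psi'(0)=d_2$ real, which is exactly the Hermitian normal form of Theorem \ref{SA_WCO_WHrady} for $\gamma=1$, taking $a_0=d_0$, $a_1=d_2$ and $c=g(0)$. As $\psi$ is by hypothesis a self-map of $\D$, the converse direction of Theorem \ref{SA_WCO_WHrady} then yields that $W_{g,\psi}$ is self-adjoint. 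The single point requiring care is the reality of the free constant $c=g(0)=g(\lambda)d_3$: since $d_3$ is real this amounts to the reality of $g(\lambda)$, which I would either include among the data or recover from the fact that a self-adjoint operator must have real eigenvalues $g(\lambda)\alpha^{\,j}$.

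For (\ref{symmetric}) the symbols are already in the form $\psi(z)=\psi(0)+\frac{\psi'(0)z}{1-\psi(0)z}$ and $g(z)=\frac{g(0)}{1-\psi(0)z}$ required by Theorem \ref{CSWCO_charac}, so $\mathcal{J}$-symmetry will follow from the converse half of that theorem provided $\psi$ maps $\D$ into itself. Applying Lemma \ref{DtoD-condition} with $a_0=d_0$ and $a_1=d_2$, the self-map condition reads $|d_0|<1$ together with $2|d_0+\bar d_0(d_2-d_0^2)|\le 1-|d_2-d_0^2|^2$, which is the condition recorded in (\ref{symmetric}); Theorem \ref{CSWCO_charac} then gives the conclusion. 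I expect the main obstacle to lie in (\ref{self-adjoint})---pinning down the reality of the weight constant $g(\lambda)$---and, more generally, in citing \cite[Remark 4.8]{TLe_severalvariables} in the exact coefficient form that matches the identity of Remark \ref{lambda real}; once those are aligned the three deductions are short.
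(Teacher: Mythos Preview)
Your approach is exactly the paper's: the paper's entire proof is the one-line citation of \cite[Remark~4.8]{TLe_severalvariables} together with Remark~\ref{lambda real}, and you have simply unpacked what those references say for each of the three parts, invoking in addition Theorem~\ref{SA_WCO_WHrady}, Theorem~\ref{CSWCO_charac} and Lemma~\ref{DtoD-condition} where appropriate.

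Two remarks on the points you flag. First, your worry in part~(\ref{self-adjoint}) is legitimate and is not resolved by the paper either: both Theorem~\ref{SA_WCO_WHrady} and the self-adjointness criterion in \cite{TLe_severalvariables} require the scalar $c=g(0)=g(\lambda)d_3$ to be real, and nothing in the stated hypotheses forces $g(\lambda)\in\mathbb{R}$; your proposed recovery via the eigenvalues of a self-adjoint operator is circular and should be discarded. This is a gap in the corollary as stated, not in your argument, and the honest fix is to add the reality of $g(\lambda)$ (equivalently of $g(0)$) to the hypotheses of~(\ref{self-adjoint}). Second, in part~(\ref{symmetric}) you correctly quote Lemma~\ref{DtoD-condition} with $1-|d_2-d_0^2|^2$ on the right; the paper's statement drops the square, which is a typo. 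With that correction your observation that the inequality in~(\ref{symmetric}) is exactly the self-map condition for $\psi$---already assumed in the hypotheses---is correct, so~(\ref{symmetric}) follows immediately from the converse direction of Theorem~\ref{CSWCO_charac}.
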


\begin{corollary}
    Let $g\in H^\infty$ and $\psi$ be an analytic map of $\D$ into itself. Assume that $W_{g,\psi}\in \{W_{f,\varphi}\}^\prime$ where $W_{f,\varphi}$ is complex symmetric with the conjugation $\mathcal{J}$. If $\varphi$ has a fixed point, say $\lambda$ in $\D$, then the space $\clm=\{ h\in H^2:h(\lambda)=0\} $ is invariant subspace for $W_{g,\psi}$.
\end{corollary}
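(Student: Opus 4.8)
The plan is to show directly that $W_{g,\psi}$ maps $\clm$ into itself, exploiting the fact that membership in $\clm$ is governed solely by the value at $\lambda$. First I would recall that for any $h\in H^2$ the weighted composition operator acts by $W_{g,\psi}h=g\cdot(h\circ\psi)$, so evaluating at the point $\lambda$ gives $(W_{g,\psi}h)(\lambda)=g(\lambda)\,h(\psi(\lambda))$. Hence everything reduces to controlling the value $\psi(\lambda)$.

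The crucial step is to invoke Lemma \ref{common fix point}: since $W_{g,\psi}\in\{W_{f,\varphi}\}^\prime$, the operator $W_{f,\varphi}$ is complex symmetric with the conjugation $\clj$, and $\varphi$ possesses the fixed point $\lambda\in\D$, that lemma forces $\psi$ to share the same fixed point, i.e.\ $\psi(\lambda)=\lambda$. Substituting this back, for any $h\in\clm$ (so that $h(\lambda)=0$) I obtain $(W_{g,\psi}h)(\lambda)=g(\lambda)\,h(\psi(\lambda))=g(\lambda)\,h(\lambda)=0$, which means $W_{g,\psi}h\in\clm$. This establishes the invariance.

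Alternatively, I would cast the identical argument in reproducing-kernel language, which may read more cleanly. Since $h(\lambda)=\langle h,K_\lambda\rangle$, we have $\clm=\{K_\lambda\}^\perp$, so $\clm$ is $W_{g,\psi}$-invariant exactly when $\operatorname{span}\{K_\lambda\}$ is $W_{g,\psi}^*$-invariant. Using the adjoint formula $W_{g,\psi}^*K_\lambda=\overline{g(\lambda)}\,K_{\psi(\lambda)}$ together with $\psi(\lambda)=\lambda$ from Lemma \ref{common fix point} yields $W_{g,\psi}^*K_\lambda=\overline{g(\lambda)}\,K_\lambda$, exhibiting $K_\lambda$ as an eigenvector of the adjoint; its one-dimensional span is then trivially $W_{g,\psi}^*$-invariant, giving the claim.

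I do not anticipate any genuine obstacle here: the entire content is the transfer of the common fixed point supplied by Lemma \ref{common fix point}, and once $\psi(\lambda)=\lambda$ is in hand the invariance is immediate by either route. The only minor points to note are that $g\in H^\infty$ guarantees $g(\lambda)$ is a well-defined finite scalar, and that $W_{g,\psi}$ is bounded on $H^2(\D)$, so that $\clm$ is a closed subspace being mapped into itself as asserted.
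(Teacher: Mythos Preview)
Your proposal is correct and your primary argument is essentially identical to the paper's own proof: invoke Lemma~\ref{common fix point} to obtain $\psi(\lambda)=\lambda$, then evaluate $(W_{g,\psi}h)(\lambda)=g(\lambda)h(\psi(\lambda))=g(\lambda)h(\lambda)=0$ for $h\in\clm$. The reproducing-kernel reformulation you add is a pleasant alternative viewpoint but is not present in the paper.
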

\begin{proof}
    By Lemma \ref{common fix point}, $\psi$ has the same fixed point $\lambda$. Therefore, for any $h\in \clm$
    \begin{equation*}
    W_{g,\psi}h(\lambda)=g(\lambda)h(\psi(\lambda))=g(\lambda)h(\lambda)=0
    \end{equation*}
\end{proof}
\begin{remark}
 \begin{enumerate}
     \item $\clm=B_\lambda H^2$, where $B_\lambda(z)=\frac{z-\lambda}{1-\bar{\lambda}z}$ and $B_\lambda H^2$ is invariant for $C_\psi$. Therefore, by employing \cite[Theorem 2.3]{Bose:Muthukumar:Sarkar}, we can deduce that $\frac{B_\lambda\circ\psi}{B_\lambda}$ belongs to the class denoted as $\cls(\D)$. In this context, $\cls(\D)$ refers to the collection of functions that are both holomorphic and have a modulus bounded by one on the domain $\D$, formally defined as:
\begin{equation*}
\cls(\D)=\{g\in H^\infty(\D):\|g\|_\infty:=\sup_{z\in\D}|g(z)|\leq 1 \}.
\end{equation*}
This particular set is recognized as the Schur class, and its elements are known as Schur functions.
\item If we assume that the weight function $\psi$ is identically $1$, and the composition operator $C_\vp$ is complex symmetric, then the symbol $\vp$ possess a fix point. Therefore, we can relax the assumption that $\vp$ has a fixed point from our results. 
 \end{enumerate}   
\end{remark}

\begin{center}
	\textbf{Acknowledgements}
\end{center}
The research of the author is supported by the NBHM postdoctoral fellowship, Department of Atomic Energy (DAE), Government of India (File No: 0204/16(21)/2022/R\&D-II/11995) and the INSPIRE grant of Dr. Srijan Sarkar (Ref: DST/INSPIRE/04/2019/000769), Department of Science \& Technology (DST), Government of India.

\bibliographystyle{plain}

\end{document}